%%%%%%%%%%%%%%%%%%%%%%%%%%%%%%%%%%%%%%%%%%%%%%%%%%%%%%%%%%%%%%%%%%%
%%                                                               %%
%% This is the sample.tex file for the ejpecp document class.    %%
%% This file is for ejpecp version 1.0                           %%
%% Please be sure that you are using the lastest version:        %%
%%  http://mirror.ctan.org/macros/latex/contrib/ejpecp/          %%
%%                                                               %%
%% The ejpecp class works *only* with a pdflatex engine.         %%
%% You need the ejpecp.cls in your current directory or in any   %%
%% directory scanned for cls files by your pdflatex engine.      %%
%%                                                               %%
%% Manual inclusion of page layout commands is useless.          %%
%%                                                               %%
%% Note that any complex file will produce delayed publication!  %%
%%                                                               %%
%%%%%%%%%%%%%%%%%%%%%%%%%%%%%%%%%%%%%%%%%%%%%%%%%%%%%%%%%%%%%%%%%%%

%%%%%%%%%%%%%%%%%%%%%%%%%%%%%%%%%%%%%%%%%%%%%%%%%%%%%%%%%%%%%%%%%%%
%%                                                               %%
%% Journal selection: ECP or EJP.                                %%
%%                                                               %%
%%%%%%%%%%%%%%%%%%%%%%%%%%%%%%%%%%%%%%%%%%%%%%%%%%%%%%%%%%%%%%%%%%%

\documentclass[ECP]{ejpecp} % replace ECP by EJP if needed.  
\graphicspath{ {Images/} }
%%%%%%%%%%%%%%%%%%%%%%%%%%%%%%%%%%%%%%%%%%%%%%%%%%%%%%%%%%%%%%%%%%%
%%                                                               %%
%% Please uncomment and adapt to your encoding if needed:        %%
%%                                                               %%
%%%%%%%%%%%%%%%%%%%%%%%%%%%%%%%%%%%%%%%%%%%%%%%%%%%%%%%%%%%%%%%%%%%

%\usepackage[T1]{fontenc}
%\usepackage[utf8]{inputenc}
\usepackage{subfigure}
\usepackage{tikz}
\newcommand{\cc}{c}

%%%%%%%%%%%%%%%%%%%%%%%%%%%%%%%%%%%%%%%%%%%%%%%%%%%%%%%%%%%%%%%%%%%
%%                                                               %%
%% Please add here your own packages (be minimalistic please!):  %%
%% Please avoid using exotic packages and keep things simple.    %%
%% It is not necessary to include ams* and graphicx packages     %%
%% since they are automatically included by the ejpecp class.    %%
%%                                                               %%
%%%%%%%%%%%%%%%%%%%%%%%%%%%%%%%%%%%%%%%%%%%%%%%%%%%%%%%%%%%%%%%%%%%

%\usepackage{enumerate}  % uncomment to use this package

%%%%%%%%%%%%%%%%%%%%%%%%%%%%%%%%%%%%%%%%%%%%%%%%%%%%%%%%%%%%%%%%%%%
%%                                                               %%
%% Title (please edit and customize):                            %%
%%                                                               %%
%%%%%%%%%%%%%%%%%%%%%%%%%%%%%%%%%%%%%%%%%%%%%%%%%%%%%%%%%%%%%%%%%%%

\SHORTTITLE{Coexistence in random growth} 

\TITLE{Coexistence in a random growth model with competition} % \thanks is optional. Insert line breaks with \\

%\DEDICATORY{Dedicated to the memory of ...} % Optional

%%%%%%%%%%%%%%%%%%%%%%%%%%%%%%%%%%%%%%%%%%%%%%%%%%%%%%%%%%%%%%%%%%%
%%                                                               %%
%% Authors (please edit and customize):                          %%
%%                                                               %%
%%%%%%%%%%%%%%%%%%%%%%%%%%%%%%%%%%%%%%%%%%%%%%%%%%%%%%%%%%%%%%%%%%%

\AUTHORS{%
  Shane~Turnbull\footnote{Department of Mathematics and Statistics, Lancaster University, United Kingdom.
    \EMAIL{s.m.turnbull@lancaster.ac.uk, a.g.turner@lancaster.ac.uk}}
  \and %% remove this line and below if single author
  Amanda Turner\footnotemark[1]}%AUTHORS
%% Type \and between all consecutive authors (not only before the last author).
%% Note: you may use \BEMAIL to force a line break before e-mail display.

%% Here is a compact example with two authors with same affiliation
%% \AUTHORS{%
%%  Michael~First\footnote{Some University. \EMAIL{mf,js@uni.edu}
%%  \and 
%%  John~Second\footnotemark[2]}%AUTHORS
%% Note: The \footnotemark is the footnote number that you wish to reuse. Here
%% it is [2] (we took into account the footnote generated by \thanks in title).

%%%%%%%%%%%%%%%%%%%%%%%%%%%%%%%%%%%%%%%%%%%%%%%%%%%%%%%%%%%%%%%%%%%
%%                                                               %%
%% Please edit and customize the following items:                %%
%%                                                               %%
%%%%%%%%%%%%%%%%%%%%%%%%%%%%%%%%%%%%%%%%%%%%%%%%%%%%%%%%%%%%%%%%%%%

\KEYWORDS{Random growth models; Hastings-Levitov; scaling limits; ergodic limits} % Separate items with ;

\AMSSUBJ{60Fxx; 60K35} % Edit. Separate items with ;
%\AMSSUBJSECONDARY{FIXME:} % Optional, separate items with ;

\SUBMITTED{July 7, 2019} % Edit.
\ACCEPTED{February 28, 2020} % Edit.

%%%%%%%%%%%%%%%%%%%%%%%%%%%%%%%%%%%%%%%%%%%%%%%%%%%%%%%%%%%%%%%%%%%
%%                                                               %%
%% Please uncomment and edit if you have an arXiv ID:            %%
%%                                                               %%
%%%%%%%%%%%%%%%%%%%%%%%%%%%%%%%%%%%%%%%%%%%%%%%%%%%%%%%%%%%%%%%%%%%

%\ARXIVID{NNNN.NNNNvn} % Edit.
%\HALID{hal-NNN} % Edit.

%%%%%%%%%%%%%%%%%%%%%%%%%%%%%%%%%%%%%%%%%%%%%%%%%%%%%%%%%%%%%%%%%%%
%%                                                               %%
%% The following items will be set by the Managing Editor.       %%
%%                                                               %%
%%%%%%%%%%%%%%%%%%%%%%%%%%%%%%%%%%%%%%%%%%%%%%%%%%%%%%%%%%%%%%%%%%%

\VOLUME{25}
\YEAR{2020}
\PAPERNUM{26}
\DOI{https://doi.org/10.1214/20-ECP304}

%%%%%%%%%%%%%%%%%%%%%%%%%%%%%%%%%%%%%%%%%%%%%%%%%%%%%%%%%%%%%%%%%%%
%%                                                               %%
%% Please edit and customize the abstract:                       %%
%%                                                               %%
%%%%%%%%%%%%%%%%%%%%%%%%%%%%%%%%%%%%%%%%%%%%%%%%%%%%%%%%%%%%%%%%%%%

\newcommand{\br}{\begin{color}{red}}
\newcommand{\er}{\end{color}}

\ABSTRACT{We consider a variation of the Hastings-Levitov model HL(0) for random growth in which the growing cluster consists of two competing regions. We allow the size of successive particles to depend both on the region in which the particle is attached, and the harmonic measure carried by that region. We identify conditions under which one can ensure coexistence of both regions. In particular, we consider whether it is possible for the process giving the relative harmonic measures of the regions to converge to a non-trivial ergodic limit.
} 

%%%%%%%%%%%%%%%%%%%%%%%%%%%%%%%%%%%%%%%%%%%%%%%%%%%%%%%%%%%%%%%%%%%
%%                                                               %%
%% Please add your own macros and environments below:            %%
%%                                                               %%
%% If possible, avoid using \def and use instead \newcommand     %%
%% If possible, avoid defining your own environments, and use    %%
%% instead the environments already defined by ejpecp:           %%
%%  assumption, assumptions, claim, condition, conjecture,       %%
%%  corollary, definition, definitions, example, exercise, fact, %%
%%  facts, heuristics, hypothesis, hypotheses, lemma, notation,  %%
%%  notations, problem, proposition, remark, theorem             %%
%%                                                               %%
%%%%%%%%%%%%%%%%%%%%%%%%%%%%%%%%%%%%%%%%%%%%%%%%%%%%%%%%%%%%%%%%%%%

 % example of author macro
 % another example of author macro

%%%%%%%%%%%%%%%%%%%%%%%%%%%%%%%%%%%%%%%%%%%%%%%%%%%%%%%%%%%%%%%%%%%
%%                                                               %%
%% No macro definitions below this line please!                  %%
%%                                                               %%
%%%%%%%%%%%%%%%%%%%%%%%%%%%%%%%%%%%%%%%%%%%%%%%%%%%%%%%%%%%%%%%%%%%

\begin{document}

%%%%%%%%%%%%%%%%%%%%%%%%%%%%%%%%%%%%%%%%%%%%%%%%%%%%%%%%%%%%%%%%%%%
%%                                                               %%
%% No need for \maketitle.                                       %%
%%                                                               %%
%%%%%%%%%%%%%%%%%%%%%%%%%%%%%%%%%%%%%%%%%%%%%%%%%%%%%%%%%%%%%%%%%%%

%%%%%%%%%%%%%%%%%%%%%%%%%%%%%%%%%%%%%%%%%%%%%%%%%%%%%%%%%%%%%%%%%%%
%%                                                               %%
%% Please replace what follows by the body of your article       %%
%% (up to the bibliography):                                     %%
%%                                                               %%
%%%%%%%%%%%%%%%%%%%%%%%%%%%%%%%%%%%%%%%%%%%%%%%%%%%%%%%%%%%%%%%%%%%

\section{Introduction}
We consider planar random growth models in which clusters grow by the successive attachment of single particles. In the specific class of models that we study, such clusters are encoded as compositions of conformal mappings. The simplest model of this type is the HL(0) model, proposed by Hastings and Levitov \cite{HAS98}, in which clusters are constructed as successive compositions of i.i.d.~mappings. This model has been well studied (see \cite{NOR12, SIL17} amongst others). In physical models for random growth, specifically Laplacian random growth models, the growth rate along the cluster boundary depends on the harmonic measure of the cluster boundary. This dependency makes the analysis considerably less tractable. In this paper, we introduce dependency on harmonic measure into a variant of the HL(0) model, through competition. We define a random growth model in which the cluster is made up of two competing regions and incoming particles are added to the region to which they attach. Dependency on harmonic measure is introduced by allowing the growth of each competing region to depend on the relative harmonic measure of that region. We explore whether it is possible for both regions to coexist indefinitely (in the sense that there is a positive probability that each region has positive harmonic measure for all time), or whether it is always the case that one region will dominate to the exclusion of the other.

\subsection{Conformal models for random growth}
The idea of using conformal mappings to represent random growth in two-dimensions has been around since the work of Hastings and Levitov \cite{HAS98}. The primary benefit of this approach is that it provides a purely analytic, rather than geometric, representation of a randomly growing cluster which enables one to exploit analytic techniques. In this section we provide the general framework into which our models fall.

For $\cc>0$, let $f_\cc$ be the unique conformal bijection
\begin{equation*}
f_\cc: \Delta:=\{z \in \mathbb{C}:|z|>1 \} \cup \{ \infty \} \rightarrow D:=\Delta \setminus (1,1+d]
\end{equation*} 
with $f_\cc(z)=e^\cc z+ \mathcal{O}(1)$ at infinity, where $d=d(\cc)$ and $\cc$ are related via the equation
\(
e^\cc=1+d^2/(4(1+d)).
\)
Observe that $d \asymp \cc^{1/2}$ as $\cc \rightarrow 0$. This mapping represents attaching a particle (a `slit' of length $d$, or equivalently of logarithmic capacity $\cc$) to the unit circle $\mathbb{T}$ at the point 1. For $\theta \in [-1,1)$, the mapping
\begin{equation*}
f_\cc^\theta(z)=e^{\pi i \theta} f_\cc (e^{-\pi i \theta}z)
\end{equation*}
represents attaching a particle with logarithmic capacity $c$ at position $e^{\pi i \theta}$ on $\mathbb{T}$.

Now consider a sequence of positions $(\theta_n)_{n \in \mathbb{N}}$ in $[-1,1)$, a sequence of logarithmic capacities $(c_n)_{n \in \mathbb{N}}$ in $(0,\infty)$, and a sequence of times $(t_n)_{n \in \mathbb{N}_0}$ with $0=t_0 < t_1 < t_2 < \cdots$. Define
\[
\Phi_t(z) = 
\begin{cases}
z \quad &\mbox{ if } \quad t_0 \leq t < t_1; \\
f_{\cc_1}^{\theta_1} \circ \cdots \circ f_{\cc_n}^{\theta_n}(z) \quad &\mbox{ if } \quad t_n \leq t<t_{n+1}, \ n \geq 1.
\end{cases} 
\]
Then $(\Phi_t)_{t \geq 0}$ is a cadlag process of conformal mappings, each of which maps the exterior unit disk to the complement of a compact set. In other words, 
\begin{equation*}
\Phi_t:\Delta \rightarrow \mathbb{C} \setminus K_t.
\end{equation*}
The sets $(K_t)_{t \geq 0}$ are called clusters which satisfy $K_{s} \subseteq K_t$ for $s \leq t$. If $t_n \leq t < t_{n+1}$, then the set $K_t$ represents the growing cluster after the addition of $n$ particles, and  $K_{t_{n+1}} = K_{t_n} \cup P_{n+1}$ where 
\begin{equation} \label{eq:pdistort}
P_{n+1}=\{ \Phi_{t_n}(\lambda e^{i \pi \theta_{n+1}}) : \lambda \in (1, 1+d(c_{n+1})] \}.
\end{equation}

\begin{figure}[h]
\begin{center}
\includegraphics[scale=0.48]{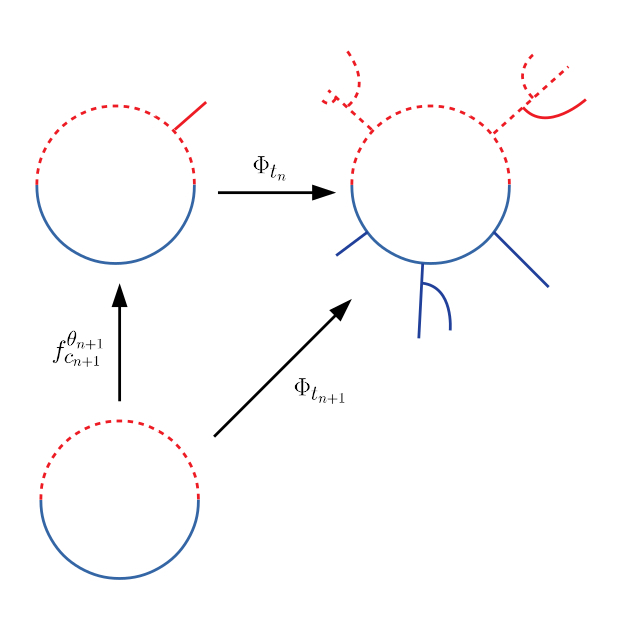}
\caption{Diagram illustrating how growing clusters can be represented as compositions of conformal mappings.}
\label{fig:model}
\end{center}
\end{figure}

By choosing the sequences $(\theta_n)_{n \in \mathbb{N}}$, $(c_n)_{n \in \mathbb{N}}$ and $(t_n)_{n \in \mathbb{N}_0}$ in different ways, one obtains a wide class of growth processes. The HL(0) process mentioned above is obtained by taking $(\theta_n)_{n \in \mathbb{N}}$ to be i.i.d~uniform $[-1,1)$ random variables, $c_n=c$  and $t_n=n$
for all $n \in \mathbb{N}$. A continuous-time embedding of HL(0) is obtained by taking $(\theta_n)_{n \in \mathbb{N}}$ and $(c_n)_{n \in \mathbb{N}}$ as before, but letting $(t_n)_{n \in \mathbb{N}_0}$ be a constant-rate Poisson process. For other choices in the literature, see \cite{STV} and the references therein. 

In this paper, we introduce two competing regions by colouring the upper half of the unit circle red and the lower half blue. Arriving particles then take the colour of the region that they are attached to as illustrated in Figure \ref{fig:model}. In Section \ref{intro:comp} we explain how we choose the sequences $(\theta_n)_{n \in \mathbb{N}}$, $(c_n)_{n \in \mathbb{N}}$ and $(t_n)_{n \in \mathbb{N}_0}$ for our specific model.

\subsection{Harmonic measure} \label{sec:harm}

The motivation behind the Hastings-Levitov model was to model growing clusters formed by the aggregation of diffusing particles. In particular, the aim was to model a process known as diffusion limited aggregation (DLA) \cite{WIT83}. In the DLA model, particles are released one by one from `infinity' and follow the trajectory of a Brownian motion until they hit the cluster at which point each particle sticks. This model is very hard to analyse mathematically, and essentially only one rigorous result has been proved about DLA in the almost 40 years since it was first proposed \cite{KES87}. DLA is an example of a Laplacian random growth model in that the rate of growth is determined by harmonic measure on the cluster boundary.

\begin{definition}[Harmonic measure]
Let $D \subset \mathbb{C}$ and let $\partial D$ be the boundary of $D$. Then for any $A \subseteq \partial D$ and $x \in D$, the harmonic measure of the set $A$ as seen from $x$ is defined to be 
\begin{equation*}
\mu^x_D(A)=\mathbb{P}(B_{\tau} \in A | B_0=x),
\end{equation*}
where $(B_t)_{t \geq 0}$ is complex Brownian motion and $\tau=\inf_{t>0} \{ B_t \in \partial D\}$ is the first exit time from $D$.
\end{definition}

In DLA, the attachment position of successive particles is given by the distribution of harmonic measure as seen from infinity, where the point at infinity is defined via the Riemann sphere. Therefore, using the notation of the previous section, if $K_{t_n}$ is a DLA cluster with $n$ particles, corresponding to conformal map $\Phi_{t_n}$, then $\theta_{n+1}$ must be chosen in such a way that $\Phi_{t_n}(e^{\pi i \theta_{n+1}})$ is distributed according to harmonic measure on $\partial K_{t_n}$.

Directly deriving the harmonic measure on $\partial K_{t_n}$ is complicated, since the set $K_{t_n}$ can be quite intricate. However, the conformal mapping construction turns out to be convenient here.
For $a \leq b < a+2$, let $I_{a,b} \subseteq \mathbb{T}$ be the arc of the unit circle between $e^{ \pi ia}$ and $e^{ \pi ib}$, taken in an anticlockwise direction.
As the distribution of Brownian motion is invariant under conformal mappings, 
\[
\mu^{\infty}_{K_{t_n}}(\Phi_{t_n}(I_{a,b})) = \mu^{\infty}_{\Delta}(I_{a,b}) = (b-a)/2.
\]
Hence, in order to model DLA as a conformal mapping model, each $\theta_n$ should be chosen uniformly on $[-1, 1)$, as for HL(0). This connection provided the original motivation for defining the Hastings-Levitov family of models. (The HL(0) model is not proposed as a model for DLA, however, as the successive composition of conformal maps distorts the size and shape of each added particle, as in \eqref{eq:pdistort}. This distortion can be corrected for, by allowing the capacity sequence $(c_n)_{n \in \mathbb{N}}$ to depend on the harmonic measure. Further detail is given in \cite{HAS98}. So far the Hastings-Levitov version of DLA has proved intractable to mathematical analysis.) 

Define $\gamma_{\cc}^{\theta}: \mathbb{R} \to \mathbb{R}$ by
\[
\gamma_{\cc}^{\theta}(x)= \frac{1}{\pi i}\log \left( (f_\cc^{\theta})^{-1}(e^{\pi i x}) \right)
\]
and let $\gamma_{\cc} = \gamma_{\cc}^{0}$,
where the branch of the logarithm is chosen so that $\gamma_\cc$ maps $(-1,1)$ into itself and $\gamma_{\cc}(x+2n) = 2n+ \gamma_{\cc}(x)$ for all $n \in \mathbb{Z}$. Then \begin{equation*}
\gamma_\cc^\theta(x)=\theta+\gamma_\cc(x-\theta).
\end{equation*}
By direct computation it can be shown that, for $0 < |x| < 1$, 
\begin{equation} \label{eq:gammac}
\gamma_\cc(x)=2\pi^{-1}\mbox{sgn}(x)\tan^{-1}\sqrt{e^\cc \tan^2(\pi x/2)+e^\cc-1}.
\end{equation} 
The map $\gamma_\cc$ has a discontinuity at $0$ (and hence at every even integer point). The value that $\gamma_\cc$ takes at this point will turn out not to matter, so without loss of generality we may assume that $\gamma_\cc$ is right-continuous. 
Set
\[
Z_t(x) = 
\begin{cases}
x \quad &\mbox{ if } \quad t_0 \leq t < t_1; \\
\gamma_{\cc_n}^{\theta_n} \circ \cdots \circ \gamma_{\cc_1}^{\theta_1}(x) \quad &\mbox{ if } \quad t_n \leq t<t_{n+1}, \ n \geq 1.
\end{cases} 
\]

The sequence $(Z_t)_{t \geq 0}$ describes the evolution of harmonic measure on the cluster boundary in that if $a \leq b < a + 2$ and $A_t$ is the section of $\partial K_t$ which lies between $e^{ \pi ia}$ and $e^{ \pi ib}$, taken anticlockwise, then
\[
\mu_t(a,b):=\mu_{K_t}^{\infty}(A_t) = (Z_t(b)-Z_t(a))/2. 
\]

In \cite{NOR12} it is shown that, under an appropriate scaling, the evolution of harmonic measure on the boundary of the HL(0) cluster converges to the Brownian web. Specifically, if $(\theta_n)_{n \in \mathbb{N}}$ are i.i.d.~uniform $[-1,1)$ random variables, $c_n=\cc$ for all $n \in \mathbb{N}$ and $(t_n)_{n \in \mathbb{N}_0}$ is a Poisson process with rate of order $\Theta( \cc^{-3/2})$, then for all $0 \leq x_1 < \cdots < x_k < 2$, $(Z_t(x_1), \dots, Z_t(x_k))_{t \geq 0}$ converges in distribution to $(B_t(x_1), \dots, B_t(x_k))_{t \geq 0}$ as $\cc \to 0$, where $(B_t(x_1), \dots, B_t(x_k))_{t \geq 0}$ is a family of coalescing Brownian motions on the circle starting from $(x_1, \dots, x_k)$. In particular, as any two Brownian motions on the circle eventually coalesce, this implies that for all $a < b < a + 2$, $\mu_t(a,b)$ is eventually either 0 or 1. As the attachment position of particles is distributed according to harmonic measure, this means that it is not possible for infinitely many particles to attach both between $e^{\pi ia}$ and $e^{\pi ib}$, and between $e^{\pi ib}$ and $e^{\pi i(a+2)}$.

\subsection{Introducing competition}
\label{intro:comp}
The HL(0) model is the simplest conformal model for random growth to analyse, as the individual mappings which are composed to make up the cluster are i.i.d. In the Laplacian models corresponding to physical growth, the growth rate of the cluster depends non-trivially on the harmonic measure which makes the analysis considerably less tractable. In this section, we define a variant of the HL(0) model, in which dependency on harmonic measure is introduced through competition. 

As before, in order to grow a random cluster, we require a sequence of angles $(\theta_n)_{n \in \mathbb{N}}$ in $[-1,1)$, a sequence of capacities $(c_n)_{n \in \mathbb{N}}$ in $(0,\infty)$, and a sequence of times $(t_n)_{n \in \mathbb{N}_0}$ with $0=t_0 < t_1 < t_2 < \cdots$. We aim to establish results in the small-particle limit. That is, we let $\cc>0$ be a scaling parameter which controls the logarithmic capacities of our particles so that $c_n = \Theta(c)$ for all $n \in \mathbb{N}$, and we consider the limiting behaviour of the cluster as $c \to 0$ (where the rate of arrivals $(t_n)_{n \in \mathbb{N}_0}$ is tuned appropriately to produce a non-trivial limit).   
As in the continuous-time embedding of HL(0), we take the angles $(\theta_n)_{n \in \mathbb{N}}$ to be i.i.d.~uniform in $[-1,1)$ and the arrival times $(t_n)_{n \in \mathbb{N}_0}$ to be a Poisson process with constant rate $r(c)$, which will be tuned to provide a non-trivial limit as $c \to 0$. However, we now introduce two competing regions, blue and red, as follows. At time 0, we split the unit disk $K_0$ into two regions by colouring the upper half red and the lower half blue. When each subsequent particle arrives, it takes the colour of the region that it is attached to. We also define the stochastic process $X_t^c$ to be twice the harmonic measure of the red region at time $t$. Note that $X_0^c = 2\mu_0(0, 1) = 1$, and in general 
\[
X_t^c=2\mu_t(0,1)=Z_t(1)-Z_t(0).
\] 
Figure \ref{fig:1} illustrates how the harmonic measure of the red region changes due to the arrival of a particle. 

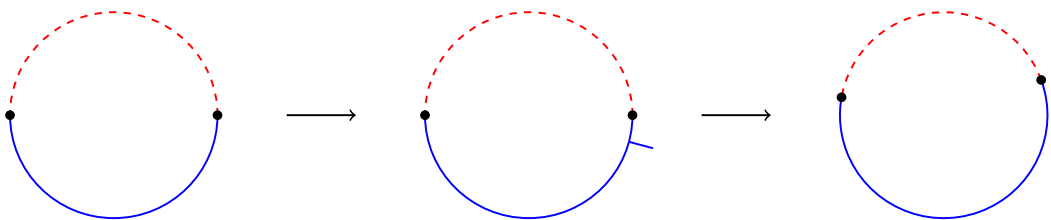
\begin{figure}[h]
\begin{center}
\scalebox{0.91}{
\begin{tikzpicture}
\draw[red,thick,dashed] (-6,0) arc (0:180:1.5cm);
\draw[blue,thick] (-6,0) arc (0:-180:1.5cm);
\fill (-6,0)  circle[radius=2pt];
\fill (-9,0)  circle[radius=2pt];

\draw[black,thick,->] (-5,0) -- (-4,0) ;

\draw[red,thick,dashed] (0,0) arc (0:180:1.5cm);
\draw[blue,thick] (0,0) arc (0:-180:1.5cm);
\fill (0,0)  circle[radius=2pt];
\fill (-3,0)  circle[radius=2pt];
\draw[blue,thick] (-0.05111126056,-0.38822856765) -- +(0.35,-0.09378221734);

\draw[black,thick,->] (1,0) -- (2,0) ;

\draw[red,thick,dashed] (5.90953893118,0.51303021498) arc (20:170:1.5cm);
\draw[blue,thick] (5.90953893118,0.51303021498) arc (20:-190:1.5cm);
\fill (5.90953893118,0.51303021498)  circle[radius=2pt];
\fill (3.02278837048,0.2604722665)  circle[radius=2pt];

\end{tikzpicture}}
\caption{In this illustrative example, the image on the left shows the initial system with the black dots indicating the boundary between the red and blue regions. The central image shows a particle joining and acquiring the colour of the region in which it has landed. The image on the right shows the effect of applying the conformal mapping $(f_{c_1}^{\theta_1})^{-1}$, which absorbs the particle into the boundary. This causes the black boundary points to move, but in such a way that the relative harmonic measure of the coloured regions is preserved between the second and third image. The process $X_t^c$ records how the harmonic measure of the red region evolves beween the first and third images.} \label{fig:1}

\end{center}
\end{figure}

We introduce dependency on harmonic measure into the system by allowing the distribution of $c_{n+1}$ to depend on $X^c_{t_n}$. To do this, we introduce functions $s^\pm(x,c): (0,2) \times (0, \infty) \to (0,\infty)$. Conditional on $X^c_{t_n}=x$, the logarithmic capacity $c_{n+1}$ takes the value $cs^+(x,c)$ if the particle lands in the red region (i.e.~if $e^{\pi i \theta_{n+1}} \in \Phi_{t_n}(I_{0, 1})$, which is an event of probability $x/2$) and $c_{n+1}$ takes the value $cs^-(x,c)$ otherwise. To ensure the correct scaling, we assume that $s^+(x,c) \to s^+(x)$ and $s^-(x,c) \to s^-(x)$, uniformly on $x \in (0,2)$ as $c \to 0$, where $s^+$ and $s^-$ are Lipschitz continuous functions on $(0,2)$, not both identically zero.

In the HL(0) model, a consequence of the evolution of harmonic measure converging to the Brownian web is that the process $(X_t^c)_{t \geq 0}$ converges in distribution to a (rescaled) Brownian motion, stopped on hitting 0 or 2.  This means that one of the two regions will dominate in the sense that eventually all arriving particles will take the same colour. The principal question that we wish to explore is whether it is possible to chose the functions $s^\pm(x,c)$ in order to ensure the coexistence of the red and blue regions. We are particularly interested in whether these functions can be chosen to ensure that the harmonic measure of each area stabilises to a non-trivial ergodic process. 

Our main theorem is that this is possible, but only if the functions $s^{\pm}(x,c)$ are chosen very carefully. The precise result is given as Theorem \ref{thrm:mainerg}, but essentially states the following.

\begin{theorem}
Suppose that 
\(
s^+(x)=s^-(x)=s(x)>0
\)
 for all $x \in (0,2)$, that
 \(
 r(c) =  c^{-3/2}
 \)
 and that 
\[
c^{-1/2} \log (c^{-1}) (s^+(x,c)-s^-(x,c)) \rightarrow h(x) 
\]
uniformly in $x$ as $c \to 0$ for some Lipschitz continuous function $h$ on $(0,2)$ for which $h(x)/s(x)^{3/2}$ is locally integrable on $(0,2)$. Set
\[
m(x)=\frac{3 \pi^3}{32 s(x)^{3/2}} \exp \left (  \int_1^x \frac{3 \pi h(y)}{16 s(y)^{3/2}} dy \right ).
\]
If $s^{\pm}(x,c)$ are chosen so that $s$ and $h$ satisfy the further assumption that $m(x)$ is integrable on $(0,2)$ (and an additional technical condition), then $X_t^c$
converges in distribution as $t \to \infty$ and the limit converges as $c \to 0$ to some continuous random variable which has density proportional to $m(x)$ on $(0,2)$.  
\end{theorem}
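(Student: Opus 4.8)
The plan is to prove the two limits separately: for each fixed $c$ I show that $(X_t^c)_{t\ge0}$ is ergodic, so that it converges in distribution to a unique stationary law $\nu^c$ as $t\to\infty$, and then I show that $\nu^c$ converges weakly as $c\to0$ to the law $\nu$ with density proportional to $m$. The engine behind the second limit is that $(X_t^c)$ is a time-homogeneous pure-jump Markov process: jumps arrive at rate $r(c)=c^{-3/2}$, and at a jump the map $\gamma_{\hat c}^{\theta}$ is composed on the outside, with $\theta$ uniform on $[-1,1)$ and $\hat c=cs^{+}(X,c)$ or $\hat c=cs^{-}(X,c)$ according to whether the arriving particle is red or blue. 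Writing $a=Z_{t_n}(0)$, $b=Z_{t_n}(1)$ so that $X_{t_n}^c=b-a$, and setting $g_{\hat c}(u)=\gamma_{\hat c}(u)-u$, the identity $\gamma_{\hat c}^{\theta}(y)=\theta+\gamma_{\hat c}(y-\theta)$ gives the increment
\[
\Delta X = g_{\hat c}(b-\theta)-g_{\hat c}(a-\theta),
\]
a periodic function of $\theta$ concentrated near $\theta=a$ and $\theta=b$. The whole analysis reduces to the conditional moments of $\Delta X$, so my first goal is to identify the generator of the limiting diffusion by computing $r(c)\,\mathbb{E}[\Delta X\mid X=x]$ and $r(c)\,\mathbb{E}[(\Delta X)^2\mid X=x]$ and checking that the remaining moments are negligible.

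For the moment computations I would expand $g_{\hat c}$ from \eqref{eq:gammac}. Away from the discontinuity one has $g_{\hat c}(u)\approx \hat c/(\pi\tan(\pi u/2))$, while within a boundary layer of width of order $\hat c^{1/2}$ around $0$ one has $g_{\hat c}(u)=O(\hat c^{1/2})$; matching these gives, for the half-push area and its square,
\[
\int_0^1 g_{\hat c}(u)\,du \sim \frac{\hat c\log(\hat c^{-1})}{\pi^2}, \qquad \int_0^1 g_{\hat c}(u)^2\,du \sim \frac{16}{3\pi^3}\,\hat c^{3/2}.
\]
Averaging $\Delta X$ over $\theta$, splitting the circle into the red arc $(a,b)$ and the blue arc $(b,a+2)$ and using oddness of $\gamma_{\hat c}$, the symmetric part cancels and one is left with $\mathbb{E}[\Delta X\mid X=x]\sim \pi^{-2}c\log(c^{-1})(s^{+}(x,c)-s^{-}(x,c))$, while $\mathbb{E}[(\Delta X)^2\mid X=x]\sim \frac{16}{3\pi^3}c^{3/2}(s^{+}(x)^{3/2}+s^{-}(x)^{3/2})$. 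Multiplying by $r(c)=c^{-3/2}$ and using the hypotheses $s^\pm\to s$ and $c^{-1/2}\log(c^{-1})(s^+-s^-)\to h$ gives drift $\mu(x)=h(x)/\pi^2$ and diffusion coefficient $a(x)=\frac{32}{3\pi^3}s(x)^{3/2}$, and all subleading terms (including an $x$-dependent $\log\sin(\pi x/2)$ correction) vanish in the limit because $s^+-s^-\to0$. Thus the candidate limit is the diffusion with generator $\mathcal{L}f=\mu f'+\frac{1}{2} a f''$, whose speed measure gives the stationary density
\[
m(x)\propto \frac{1}{a(x)}\exp\!\left(\int_1^x\frac{2\mu(y)}{a(y)}\,dy\right),
\]
and substituting $\mu$ and $a$ reproduces exactly the formula for $m$ in the statement, the local integrability of $h/s^{3/2}$ guaranteeing that the scale function is well defined.

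Having identified the limit, I would upgrade the generator convergence to process-level convergence. The estimates above show $\mathcal{L}^c f\to \mathcal{L}f$ locally uniformly for $f\in C_c^\infty(0,2)$; combined with a tightness estimate for $(X^c)$ on compact subintervals this gives, by the martingale-problem / Stroock--Varadhan machinery, $X^c\Rightarrow X$ on Skorokhod path space up to exit from compacts of $(0,2)$. For the stationary laws I would avoid interchanging the two limits directly: for fixed $c$ I establish positive recurrence via a Lyapunov--Foster argument driven by the sign of the drift (ensuring $X^c$ returns to the interior and is not absorbed at $\{0,2\}$), giving a unique $\nu^c$ with $X_t^c\Rightarrow\nu^c$. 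I then show the family $\{\nu^c\}$ is tight and that every subsequential weak limit $\nu$ satisfies the stationarity identity $\int\mathcal{L}f\,d\nu=0$ for all $f\in C_c^\infty(0,2)$, obtained by passing to the limit in $\int\mathcal{L}^c f\,d\nu^c=0$. Since the limit diffusion has a unique invariant probability measure, namely the normalisation of $m$, this pins down $\nu$ and yields $\nu^c\Rightarrow\nu$.

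I expect the main obstacle to be the boundary behaviour at $\{0,2\}$, which is exactly where coexistence is won or lost. One must show that the endpoints are entrance (non-absorbing) boundaries for the limit diffusion, so that $m$ is a genuinely normalisable invariant density rather than a measure that charges $\{0,2\}$; this is governed by Feller's test applied to $\mu$ and $a$ near the endpoints and is presumably the content of the ``additional technical condition,'' together with the integrability of $m$. At the discrete level the analogous difficulty is to show that $\nu^c$ does not leak mass toward the boundary as $c\to0$, that is, to make the tightness of $\{\nu^c\}$ quantitative near $\{0,2\}$ uniformly in $c$, since the drift repelling $X^c$ from the boundary is only of order $c^{-1/2}\log(c^{-1})(s^+-s^-)$ per unit time and must be shown to dominate the diffusive fluctuations there. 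Controlling this competition near the endpoints, rather than the delicate but essentially routine interior asymptotics of $\gamma_c$, is where the real work lies.
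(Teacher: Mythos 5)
Your overall skeleton---compute the first and second conditional moments of $\Delta X$ to identify the limit diffusion, read off the invariant density from scale and speed, then prove the two limits separately (fixed-$c$ ergodicity followed by convergence of the invariant laws as $c\to0$)---is the same as the paper's, and your coefficient computations (drift $h/\pi^2$, diffusion coefficient $\tfrac{32}{3\pi^3}s^{3/2}$, hence the stated $m$) are correct. The first genuine gap is the step ``positive recurrence via a Lyapunov--Foster argument \ldots giving a unique $\nu^c$ with $X_t^c\Rightarrow\nu^c$.'' For a Markov process on a continuum state space, a Foster--Lyapunov drift condition yields existence of an invariant law (this is how the paper gets $\pi^c$, via Theorem 4.5 of \cite{MEY93}), but it does \emph{not} yield uniqueness or convergence: for that you need an irreducibility/minorization input, and here nothing is off the shelf, because the one-step jump law from $x$ is the pushforward of a single uniform angle under $\theta\mapsto\tilde\gamma_{\hat c}(\theta)-\tilde\gamma_{\hat c}(\theta-x)$ and is concentrated at scale $\sqrt c$. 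Supplying this is the bulk of the paper's proof: a Doeblin-type lower bound $\inf_{x\in K}\mathbb{P}(\Delta X^c\in A\mid X^c=x)\geq a^0|A|$ for $A$ in an annulus of radius $\asymp\sqrt c$, then an explicit coupling of two copies in which Corollary 2.9 of \cite{LOC17} (driven by the same Lyapunov inequality) forces simultaneous returns to a compact set infinitely often, and a block estimate showing coalescence within one unit of time with probability bounded below. Without something of this kind, your first limit is unjustified.

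The second gap is the one you flag but do not close, and your proposed fix points in the wrong direction. Passing to the limit in $\int L^c f\,d\nu^c=0$ only gives $\int \mathcal{L}f\,d\nu=0$ for $f\in C_c^\infty(0,2)$, and this cannot pin down $\nu$: any subsequential limit charging $\{0,2\}$, e.g.\ $\delta_0$, satisfies the identity vacuously since such $f$ and their derivatives vanish at the endpoints. So the whole burden falls on a uniform-in-$c$ estimate preventing mass from escaping to the boundary, which you defer. The paper's ``additional technical condition'' is not Feller's entrance test for the limit diffusion, as you guess; it is the uniform quadratic inequality $2(x-1)b(x)+a(x)\leq -C(x-1)^2+D$ with $C>D$, imposed so that $V(x)=(x-1)^2+1$ is a Lyapunov function for the \emph{jump} generators $L^c$ uniformly in small $c$, which is exactly what feeds both the coupling above and the control of the invariant laws. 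Moreover, the paper avoids the test-function route altogether: it identifies subsequential limits $\pi^{c_n}\to\pi^0$ by starting $X^{c_n}$ from $\pi^{c_n}$, applying Ethier--Kurtz \cite{ETH86} to get a stationary solution of the limit SDE with law $\pi^0$, and invoking Theorem 23.15 of \cite{KAL02} (under conditions (i)--(ii) on $\rho$ and $m$) to conclude $\pi^0=\pi_\infty$. Your Echeverr\'ia-style argument could be repaired, but only by adding precisely the boundary estimate you leave open; as written, the proposal identifies the correct limit while proving neither of the two limits it needs.
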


One of the challenges in studying true Laplacian random growth models is that the harmonic measure on the cluster boundary is a Markov process in an infinite-dimensional space. By contrast, in the model defined above, the process $(X_t^c)_{t \geq 0}$ giving (twice) the harmonic measure of the red region is just a real-valued pure-jump Markov process and is therefore amenable to standard techniques for analysing scaling limits of Markov processes. The question of coexistence can then be answered by studying the long-time behaviour of $(X_t^c)_{t \geq 0}$ in the limit as $c \to 0$. This analysis makes up the remainder of the paper. In Section \ref{sec:diff_est} we obtain the scaling limit $X_t$ of the process $X_t^c$ as $c \to 0$, under suitable assumptions on $r(c), s^+(x,c), s^-(x,c)$. In Section \ref{sec:limit} we derive the asymptotic distribution of $X_t$ as $t \to \infty$, and explore conditions under which $\lim_{c \to 0} \lim_{t \to \infty} X_t^c = \lim_{t \to \infty} X_t$. Finally we end with an illustrative example in Section \ref{sec:example}. 

\section{Diffusion estimates} \label{sec:diff_est}

In this section, we obtain the scaling limit process $(X_t)_{t \geq 0}$ to which $(X_t^c)_{t \geq 0}$ converges as $c \to 0$, under suitable assumptions on $r(c), s^+(x,c), s^-(x,c)$. 

Throughout, convergence of stochastic processes will be taken to mean weak convergence in $D[0,\infty)$, the space of cadlag functions on $[0,\infty)$ equipped with the Skorokhod topology.

Let $\Delta X_{t_n}^c = X^c_{t_n} - X^c_{t_{n-1}}$. Then
\begin{align*}
\Delta X_{t_n}^c &= \Delta Z_{t_n}(1) - \Delta Z_{t_n}(0) \\
&= \left ( \gamma_{c_n}^{\theta_n}(Z_{t_{n-1}}(1))- Z_{t_{n-1}}(1) \right ) - \left ( \gamma_{c_n}^{\theta_n}(Z_{t_{n-1}}(0))- Z_{t_{n-1}}(0) \right ) \\
&= \tilde{\gamma}_{c_n}(Z_{t_{n-1}}(1) - \theta_n)- \tilde{\gamma}_{c_n}(Z_{t_{n-1}}(0)-\theta_n),
\end{align*}
where $\tilde{\gamma}_c(x)=\gamma_c(x)-x$. Observe that $\tilde{\gamma}_c$ is asymmetric and periodic with period 2. Due to the rotational symmetry of the model, the distribution of this process is unchanged if $\theta_n$ is taken to uniformly distributed on $[Z_{t_{n-1}}(1)-2,Z_{t_{n-1}}(1))$. Let $\theta_n'= Z_{t_{n-1}}(1)-\theta_n$. Then $\theta_n'$ is a uniform $[0,2)$ random variable. Using that 
\[
c_{n}= \begin{cases} cs^+(X_{t_{n-1}}^c,c) \quad &\mbox{ if } \quad e^{ \pi i \theta_{n}} \in I_{Z_{t_{n-1}}(0), Z_{t_{n-1}}(1)}, \\
cs^-(X_{t_{n-1}}^c,c) &\mbox{ otherwise},
\end{cases} 
\]
we get
\begin{align*}
\Delta X_{t_n}^c &= 
\begin{cases}
\tilde{\gamma}_{cs^+(X_{t_{n-1}}^c,c)}(\theta_n')- \tilde{\gamma}_{cs^+(X_{t_{n-1}}^c,c)}(\theta_n'-X_{t_{n-1}}^c) \quad &\mbox{ if } \theta_n' \in (0,X_{t_{n-1}}^c), \\
\tilde{\gamma}_{cs^-(X_{t_{n-1}}^c,c)}(\theta_n')- \tilde{\gamma}_{cs^-(X_{t_{n-1}}^c,c)}(\theta_n'-X_{t_{n-1}}^c) \quad &\mbox{ if } \theta_n' \in (X_{t_{n-1}}^c,2).
\end{cases}
\end{align*}
Hence the stochastic process $(X_t^c)_{t \geq 0}$ is a pure-jump real-valued Markov process with kernel   
\begin{align}\label{eq:kernel}
\nonumber K^c(x,dy)dt := & \ \mathbb{P}\left(\left . t_n \in dt, \Delta X^c_{t_n} \in dy \right | t_n>t,X^c_{t_{n-1}}=x \right) \\
 = &\ \frac{1}{2} r(c) \int_{0}^x \delta_y \left(\tilde{\gamma}_{cs^+(x,c)}(\theta)-\tilde{\gamma}_{cs^+(x,c)}(\theta-x)\right) d\theta dy dt \\
\nonumber &  \ + \frac{1}{2} r(c) \int_{x}^{2} \delta_y \left(\tilde{\gamma}_{cs^-(x,c)}(\theta)-\tilde{\gamma}_{cs^-(x,c)}(\theta-x)\right) d\theta dy dt,
\end{align}
where $\delta_y$ is the Dirac delta function at $y$.

The scaling limit of this process can be found using Theorem 7.4.1 and Corollary 7.4.2 in \cite{ETH86}, which we restate here for convenience.

\begin{theorem}[Ethier and Kurtz] \label{thrm:kurtz}
Let $a=(a_{i,j})$ be a Lipschitz continuous, symmetric, non-negative definite $d \times d$ matrix valued function on $\mathbb{R}^d$ and let $b: \mathbb{R}^d \longrightarrow \mathbb{R}^d$ be Lipschitz continuous. Let $K^c(x,dy)$ be the kernel associated with the process $(X^c_t)_{t \geq 0}$, which takes values on some subset $I \subseteq \mathbb{R}^d$ and define 
\begin{equation*}
b^c(x)=\int_{\mathbb{R}^d} y K^c(x,dy) \hspace{1cm} \mbox{and} \hspace{1cm} a^c(x)=\int_{\mathbb{R}^d} yy^T K^c(x,dy).
\end{equation*} 
Suppose that,
\begin{equation*}
\sup_{x \in I}|a^c(x)-a(x)| \rightarrow 0 \hspace{1cm} \mbox{and} \hspace{1cm} \sup_{x \in I}|b^c(x)-b(x)| \rightarrow 0
\end{equation*}
and that 
\begin{equation*}
\sup_{t>0}|X^c_t-X^c_{t-}| \rightarrow 0
\end{equation*}
as $c  \rightarrow 0$. If $X_0^c\to X_0$ weakly as $c \to 0$, then $X_t^c \rightarrow X_t$ weakly in $D[0,\infty)$ where $(X_t)_{t \geq 0}$ is a solution to the stochastic differential equation given by 
\begin{equation} \label{eq:diff}
dX_t=b(X_t)dt+\sigma(X_t)dB_t, 
\end{equation}
where $a(x)= \sigma(x)^T\sigma(x)$.
\end{theorem}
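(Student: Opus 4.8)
The plan is to prove this by the standard route for diffusion approximations of Markov processes: identify the infinitesimal generator of each rescaled jump process, show that these generators converge to the generator of the diffusion \eqref{eq:diff} on a suitable core, and then invoke the general martingale-problem convergence theory of \cite{ETH86} (e.g. Corollary 4.8.16) together with well-posedness of the limiting problem and tightness. Since $(X^c_t)_{t \ge 0}$ is a pure-jump Markov process with kernel $K^c(x,dy)$, its generator acts on $f \in C_c^\infty(\mathbb{R}^d)$ by
\[
\mathcal{A}^c f(x) = \int_{\mathbb{R}^d} ( f(x+y) - f(x) )\, K^c(x,dy).
\]

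The core of the argument is a second-order Taylor expansion. Writing $f(x+y) - f(x) = y \cdot \nabla f(x) + \tfrac12 y^{T} \nabla^2 f(x)\, y + R_f(x,y)$ and integrating against $K^c(x,dy)$ gives, by the definitions of $b^c$ and $a^c$,
\[
\mathcal{A}^c f(x) = b^c(x)\cdot \nabla f(x) + \tfrac12 \sum_{i,j} a^c_{i,j}(x)\, \partial_i \partial_j f(x) + \int_{\mathbb{R}^d} R_f(x,y)\, K^c(x,dy).
\]
The remainder obeys $|R_f(x,y)| \le \tfrac12 \omega(|y|)\, |y|^2$, where $\omega$ is the modulus of continuity of $\nabla^2 f$. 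Since the jumps of $X^c$ vanish uniformly, the mass of $K^c(x,\cdot)$ concentrates near the origin, while $\int |y|^2 K^c(x,dy) = \mathrm{tr}\, a^c(x)$ stays uniformly bounded; hence the remainder integral tends to $0$ uniformly in $x$. Combined with the hypotheses $\sup_x |b^c - b| \to 0$ and $\sup_x |a^c - a| \to 0$, this yields $\sup_{x \in I} |\mathcal{A}^c f(x) - \mathcal{A} f(x)| \to 0$, where $\mathcal{A} f(x) = b(x)\cdot \nabla f(x) + \tfrac12 \sum_{i,j} a_{i,j}(x)\, \partial_i \partial_j f(x)$ is exactly the generator of \eqref{eq:diff}.

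It then remains to upgrade generator convergence to process convergence. First I would establish relative compactness of $\{X^c\}$ in $D[0,\infty)$: the uniform bounds on $b^c$ and $a^c$ control $\mathcal{A}^c f$ uniformly on the core, which (with the compact containment that holds automatically when $I$ is bounded) yields Aldous-type tightness, while the condition $\sup_t |X^c_t - X^c_{t-}| \to 0$ forces every subsequential limit to have continuous paths. Second, I would check that the limiting martingale problem for $\mathcal{A}$ is well-posed: existence follows from the relative compactness step, since any subsequential limit solves the $\mathcal{A}$-martingale problem, and uniqueness follows by identifying any such solution with the law of the unique solution of \eqref{eq:diff}. Convergence $X^c \Rightarrow X$ then follows from the convergence of generators, the well-posedness of the limit, and the assumed weak convergence $X^c_0 \to X_0$. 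I expect the main obstacle to be the uniqueness half of well-posedness: the theorem assumes only that $a$ is Lipschitz and nonnegative definite, so a factor $\sigma$ with $a = \sigma^{T}\sigma$ is in general only H\"older-$1/2$, and uniqueness of the martingale problem for a possibly degenerate diffusion coefficient is delicate. One must either invoke the regularity theory for square roots of nonnegative-definite matrices, or (as suffices in the one-dimensional setting relevant to the application here) appeal to a Yamada--Watanabe-type pathwise uniqueness result valid for H\"older-$1/2$ coefficients.
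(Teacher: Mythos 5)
There is nothing in the paper to compare against: the paper does not prove this statement, but quotes it (as it says explicitly) from Theorem 7.4.1 and Corollary 7.4.2 of Ethier and Kurtz \cite{ETH86}. Your outline is, in essence, a correct reconstruction of the proof given in that reference: write the generator of the pure-jump process as $\mathcal{A}^c f(x)=\int (f(x+y)-f(x))K^c(x,dy)$, Taylor-expand to second order so that the hypotheses $\sup_x|b^c-b|\to 0$, $\sup_x|a^c-a|\to 0$ and the vanishing-jump condition give uniform convergence of generators on $C_c^\infty$, then combine tightness (Aldous criterion plus compact containment) with well-posedness of the limiting martingale problem to conclude weak convergence in $D[0,\infty)$; this is exactly the martingale-problem machinery of \cite{ETH86}, Chapter 4, Section 8, that their Theorem 7.4.1 invokes. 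Your closing caveat is also a genuine and correct observation, and is worth making precise: Ethier--Kurtz state their theorem for merely \emph{continuous} $a,b$ but \emph{assume} well-posedness of the limiting martingale problem as a hypothesis, whereas the paper's restatement replaces that hypothesis by Lipschitz continuity of $a$ and $b$. Lipschitz $a$ only yields a H\"older-$1/2$ square root $\sigma$ in general, so in dimension $d\geq 2$ with degenerate $a$ this substitution does not by itself guarantee uniqueness; it does suffice in dimension one by Yamada--Watanabe, which is the only case the paper uses ($d=1$, $I=(0,2)$, where compact containment is also automatic). So your proof is sound for the setting in which the theorem is actually applied, and your identification of the uniqueness step as the delicate point is exactly where the paper's restatement is loosest relative to the original.
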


\begin{definition}[Generator]
Suppose that $(X_t)_{t \geq 0}$ is a solution of the Markovian SDE
\begin{equation}\label{eq:marksde}
dX_t=b(X_t)dt+\sigma(X_t)dB_t+\int_\mathbb{R} \zeta(X_{t-},u) N(dt,du)
\end{equation}
where $N(dt,du)$ is a Poisson random measure with intensity $\nu(du)dt$. The generator of the process $(X_t)_{t \geq 0}$ is the linear operator $L$ defined by
\[
Lf(x) = b (x) f'(x) + \frac{1}{2} a(x) f''(x) + \int_\mathbb{R} \left ( f(x+\zeta(x,u)) - f(x) \right) \nu(du), 
\]
where $a(x)= \sigma(x)^T\sigma(x)$.
\end{definition}

When the jump-sizes $\zeta(x,u)=0$ for all $x$, the process $(X_t)_{t \geq 0}$ satisfying \eqref{eq:marksde} is just the solution to the SDE \eqref{eq:diff}, whereas when $b(x)=0=\sigma(x)$ for all $x$, $(X_t)_{t \geq 0}$ is a pure-jump process with kernel 
\[
K(x,dy)= \int_{\mathbb{R}} \delta_y(\zeta(x,u)) \nu(du) dy.
\]
Theorem \ref{thrm:kurtz} is therefore a special case of a more general result in \cite{ETH86}: under mild conditions, if the initial distribution and generator of $(X_t^N)_{t \geq 0}$ converge to those of $(X_t)_{t \geq 0}$, then $X_t^N \to X_t$ weakly in $D[0,\infty)$.

By applying Theorem \ref{thrm:kurtz} to the kernel \eqref{eq:kernel} we obtain the following.
 
\begin{proposition}\label{thrm:maindiff}
Suppose there exist Lipschitz continuous functions $a,b: (0,2) \to \mathbb{R}$ such that 
\begin{equation*}
c \log (c^{-1}) r(c) (s^+(x,c)-s^-(x,c))/\pi^2  \rightarrow b(x) 
\end{equation*}
and
\begin{equation*}
\frac{16}{3 \pi^3}c^{3/2}r(c)\left (s^+(x,c)^{3/2}+s^-(x,c)^{3/2} \right )\rightarrow a(x) 
\end{equation*} 
uniformly in $x$ as $c \rightarrow 0$. Let $(X_t)_{t \geq 0}$ be the solution to \eqref{eq:diff} with $X_0=1$. Then $X_t^c \rightarrow X_{t \wedge \tau}$ weakly in $D[0,\infty)$, where $\tau = \inf\{t>0: X_t \notin (0,2) \}$. 
\end{proposition}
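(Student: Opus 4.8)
The plan is to apply the Ethier--Kurtz result (Theorem \ref{thrm:kurtz}) with $d=1$ to the kernel \eqref{eq:kernel}, so that the entire proof reduces to three tasks: computing the moment functions $b^c(x)=\int y\,K^c(x,dy)$ and $a^c(x)=\int y^2 K^c(x,dy)$, showing they converge uniformly to the stated $b$ and $a$, and checking that the maximal jump vanishes. Integrating out the Dirac masses in \eqref{eq:kernel} and exploiting that $\tilde\gamma_c$ is odd, $2$-periodic and satisfies $\int_0^2\tilde\gamma_c=0$, the four resulting terms collapse to
\[
b^c(x)=r(c)\int_0^x\bigl(\tilde\gamma_{cs^+(x,c)}(\theta)-\tilde\gamma_{cs^-(x,c)}(\theta)\bigr)\,d\theta
\]
and
\[
a^c(x)=\tfrac12 r(c)\!\int_0^x\!\bigl(\tilde\gamma_{cs^+}(\theta)-\tilde\gamma_{cs^+}(\theta-x)\bigr)^2 d\theta+\tfrac12 r(c)\!\int_x^2\!\bigl(\tilde\gamma_{cs^-}(\theta)-\tilde\gamma_{cs^-}(\theta-x)\bigr)^2 d\theta.
\]
Everything then rests on the small-$c'$ asymptotics of $\int_0^x\tilde\gamma_{c'}$ and $\int_0^x\tilde\gamma_{c'}^2$, where $c'=cs^\pm(x,c)$.

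Next I would carry out a peak/bulk decomposition of these integrals around the singularity of $\tilde\gamma_{c'}$ at $\theta=0$, whose natural length scale is $\sqrt{c'}$. In the peak, substituting $\theta=\frac{2}{\pi}\sqrt{c'}\,u$ and using $\tan(\pi\theta/2)\approx\pi\theta/2$ and $\arctan w\approx w$ in \eqref{eq:gammac} gives $\tilde\gamma_{c'}(\theta)\approx\frac{2\sqrt{c'}}{\pi}\bigl(\sqrt{u^2+1}-u\bigr)$. For the second moment the peak dominates: the integrand has two decoupled peaks (near $\theta=0$ and $\theta=x$, respectively near $\theta=x$ and $\theta=2$), the cross term is lower order, and each peak contributes $\frac{8(c')^{3/2}}{\pi^3}\int_0^\infty(\sqrt{u^2+1}-u)^2\,du=\frac{16(c')^{3/2}}{3\pi^3}$, using $\int_0^\infty(\sqrt{u^2+1}-u)^2\,du=\tfrac23$. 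Summing the four peaks reproduces $a(x)=\frac{16}{3\pi^3}c^{3/2}r(c)\bigl((s^+)^{3/2}+(s^-)^{3/2}\bigr)$. For the first moment it is instead the bulk $\theta\gg\sqrt{c'}$ that dominates, where $\tilde\gamma_{c'}(\theta)\approx\frac{c'}{\pi}\cot(\pi\theta/2)$; integrating this from the peak scale $\sqrt{c'}$ up to $x$ produces the logarithmically divergent $\frac{c'}{\pi^2}\log((c')^{-1})=\frac{c'}{\pi^2}\log(c^{-1})+O(c')$, with the $x$-dependence confined to the $O(c')$ correction. Taking the difference of the $s^+$ and $s^-$ contributions gives $b^c(x)\sim r(c)\frac{c\log(c^{-1})}{\pi^2}(s^+-s^-)$, matching the hypothesised limit $b(x)$.

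Having matched leading orders, I would verify that the remaining errors are negligible uniformly in $x$. The hypotheses supply convergence of the two rescaled quantities to Lipschitz limits; combined with the uniform convergence $s^\pm(x,c)\to s^\pm(x)$ this upgrades pointwise matching to the uniform convergences $\sup_x|a^c-a|\to0$ and $\sup_x|b^c-b|\to0$ required by Theorem \ref{thrm:kurtz}. The jump condition is immediate, since every jump is bounded by $2\sup_\theta|\tilde\gamma_{c'}(\theta)|=O(\sqrt c)\to0$. Finally, since $a$ and $b$ are only defined on $(0,2)$, I would extend them to Lipschitz functions on $\mathbb{R}$ and apply Theorem \ref{thrm:kurtz}; a localisation argument shows the limit does not depend on the extension up to the exit time $\tau=\inf\{t:X_t\notin(0,2)\}$, yielding convergence to $X_{t\wedge\tau}$ (absorption corresponding to the harmonic measure reaching $0$ or $2$, i.e.\ one colour taking over).

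The main obstacle is the drift asymptotics. Extracting the precise coefficient of $\log(c^{-1})$ from $\int_0^x\tilde\gamma_{c'}$ requires carefully matching the peak and bulk contributions and controlling the cutoff-dependent and $x$-dependent remainders; in particular one must justify replacing $\log((c')^{-1})$ by $\log(c^{-1})$ and discarding the $O(c')$ bulk correction, which is legitimate because the drift hypothesis forces $r(c)\,c\,(s^+-s^-)\to0$. Moreover the bulk estimate degenerates as $x\to0$ or $x\to2$, where $\log\sin(\pi x/2)$ blows up, so securing uniformity in $x$ right up to the endpoints is the delicate point; by comparison, the decoupling of the two peaks and the control of the cross terms in the second moment are comparatively routine.
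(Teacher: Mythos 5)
Your proposal is correct and follows essentially the same route as the paper: reduce everything to the Ethier--Kurtz criterion, simplify $b^c$ and $a^c$ using the oddness, periodicity and zero mean of $\tilde\gamma_c$, and then extract the leading asymptotics via exactly the peak/bulk split that the paper encodes in its two-regime Taylor expansion of $\gamma_c$ (peak scale $\sqrt{c}$ giving the $\frac{16}{3\pi^3}(c')^{3/2}$ variance contribution, bulk $\cot$-regime giving the $\frac{c'}{\pi^2}\log(c'^{-1})$ drift). Your constants and the identification of where each moment's leading term comes from match the paper's computation, and you additionally flag (correctly) the endpoint uniformity issue that the paper passes over silently.
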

%%%%%%%%%%%%%%%%%%%%%%%
\begin{proof}
We compute the functions $a^c(x)$ and $b^c(x)$. First,
\begin{align*}
b^c(x)
=& \frac{1}{2} r(c) \int_{0}^x  \left(\tilde{\gamma}_{cs^+(x,c)}(\theta)-\tilde{\gamma}_{cs^+(x,c)}(\theta-x)\right) d\theta  \\
& \ +  \frac{1}{2}r(c) \int_{x}^{2}  \left(\tilde{\gamma}_{cs^-(x,c)}(\theta)-\tilde{\gamma}_{cs^-(x,c)}(\theta-x)\right) d\theta\\ 
=& r(c) \int_{0}^x  \tilde{\gamma}_{cs^+(x,c)}(u) du  +  r(c) \int_{x}^{2}  \tilde{\gamma}_{cs^-(x,c)}(u)du\\ 
=& \ r(c) \int_0^x \left( \tilde{\gamma}_{cs^+(x,c)}(u)- \tilde{\gamma}_{cs^-(x,c)}(u) \right) du.
\end{align*} 
For the first equality we used the asymmetry of $\tilde{\gamma}_c$ and the change of variables $u=\theta-x$ on those terms involving $\theta-x$. For the second equality we used that $\tilde{\gamma}_c$ integrates to 0 on $(0,2)$. Using similar arguments,
\begin{align*}
a^c(x)=& \frac{1}{2} r(c) \int_{0}^x  \left(\tilde{\gamma}_{cs^+(x,c)}(\theta)-\tilde{\gamma}_{cs^+(x,c)}(\theta-x)\right)^2 d\theta \\
& \ +  \frac{1}{2}r(c) \int_{x}^{2}  \left(\tilde{\gamma}_{cs^-(x,c)}(\theta)-\tilde{\gamma}_{cs^-(x,c)}(\theta-x)\right)^2 d\theta\\ 
=& \ r(c) \left[ \int_0^x \left( \tilde{\gamma}^2_{cs^+(x,c)}(u)-\tilde{\gamma}^2_{cs^-(x,c)}(u)\right) du +2\int_0^1 \tilde{\gamma}^2_{cs^-(x,c)}(u) du  \right] \\
& \ -r(c) \int_0^x \left( \tilde{\gamma}_{cs^+(x,c)}(u) \tilde{\gamma}_{cs^+(x,c)}(u-x)-\tilde{\gamma}_{cs^-(x,c)}(u) \tilde{\gamma}_{cs^-(x,c)}(u-x) \right) du\\
& \ -r(c) \int_{-1}^1 \tilde{\gamma}_{cs^-(x,c)}(u) \tilde{\gamma}_{cs^-(x,c)}(u-x) du. 
\end{align*}
By suitable Taylor expansions of \eqref{eq:gammac}, it can be shown that 
\begin{equation} \label{eq:taylor}
\gamma_{c}(u)  = \begin{cases}  \sqrt{u^2+4c/\pi^2} (1+o(1)) \quad &\mbox{ if } \quad |u| \leq c^{1/2} \log c^{-1}; \\
 u+c\cot (\pi u/2)/\pi (1+o(1)) &\mbox{ if } \quad c^{1/2}\log c^{-1}< |u| < 1.
 \end{cases}
\end{equation}
Hence,  
\begin{align*}
b^c(x)=& \  c r(c) \big [ (s^+(x,c)-s^-(x,c)) \left(\log (c^{-1}) + 2\log \sin (\pi x/2)+1+2\log 2 \right)/\pi^2 \\
& \ -  \left ( s^+(x,c)\log (s^+(x,c))-s^-(x,c)\log (s^-(x,c) \right )/\pi^{2} \big ](1+o(1)) 
\end{align*}
and 
\begin{equation*}
a^c(x) = \frac{16 }{3 \pi^3} c^{3/2} r(c) \left (s^+(x,c)^{3/2}+s^-(x,c)^{3/2} \right )(1+o(1)).
\end{equation*}
Finally, we observe that the jumps in $(X_t^c)_{t \geq 0}$ are of order at most $c^{1/2}$. 
\end{proof}

Now recall that $s^+(x,c) \to s^+(x)$ and $s^-(x,c) \to s^-(x)$ uniformly on $x \in (0,2)$ as $c \to 0$, where $s^+$ and $s^-$ are Lipschitz continuous functions on $(0,2)$, not both identically zero. Setting $\tau = \inf\{t>0: X_t \notin (0,2) \}$, as above, we obtain the following.

\begin{corollary}\label{cor:maindiff}
\begin{itemize}
\item[(a)] Suppose that 
\(
r(c) = \left ( c \log (c^{-1}) \right )^{-1}.
\) 
If $(X_t)_{t \geq 0}$ is the solution to 
\begin{equation}\label{eq:odelim}
dX_t=(s^+(X_t)-s^-(X_t))dt
\end{equation}
starting from $X_0=1$, then $X_t^c \rightarrow X_{t \wedge \tau}$ weakly in $D[0,\infty)$.
\item[(b)] Suppose that 
\(
s^+(x)=s^-(x)=s(x)
\)
for all $x \in (0,2)$, that 
\(
r(c)=c^{-3/2},
\)
and that 
\[
c^{-1/2} \log (c^{-1}) (s^+(x,c)-s^-(x,c)) \rightarrow 0 
\]
uniformly in $x$ as $c \to 0$.
If $(X_t)_{t \geq 0}$ is the solution to 
\begin{equation*}
dX_t=\sqrt{\frac{32}{3 \pi^3}s(X_t)^{3/2}} dB_t
\end{equation*}
starting from $X_0=1$, then $X_t^c \rightarrow X_{t \wedge \tau}$ weakly in $D[0,\infty)$.
\item[(c)] Suppose that 
\(
s^+(x)=s^-(x)=s(x)
\)
 for all $x \in (0,2)$, that
 \(
 r(c)=c^{-3/2},
 \)
 and that 
\[
c^{-1/2} \log (c^{-1})(s^+(x,c)-s^-(x,c)) \rightarrow h(x) 
\]
uniformly in $x$ as $c \to 0$ for some Lipschitz continuous function $h$ on $(0,2)$.
If $(X_t)_{t \geq 0}$ is the solution to 
\begin{equation*}
dX_t=\frac{1}{\pi^2}h(X_t) dt + \sqrt{\frac{32}{3 \pi^3}s(X_t)^{3/2}} dB_t
\end{equation*}
starting from $X_0=1$, then $X_t^c \rightarrow X_{t \wedge \tau}$ weakly in $D[0,\infty)$.
\end{itemize}
\end{corollary}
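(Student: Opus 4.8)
The plan is to deduce all three parts directly from Proposition~\ref{thrm:maindiff}. For each part I would substitute the prescribed rate $r(c)$ and the prescribed relationship between $s^+(\cdot,c)$ and $s^-(\cdot,c)$ into the asymptotic expressions for $b^c(x)$ and $a^c(x)$ obtained in the proof of that proposition, let $c\to0$, and read off the limiting drift $b$ and diffusion coefficient $a=\sigma^2$. The jump-size requirement of Proposition~\ref{thrm:maindiff} needs no extra work, since its proof already records that the jumps of $(X_t^c)_{t\ge0}$ are of order $c^{1/2}$; so it remains only to verify the two convergence hypotheses and the Lipschitz continuity of the limiting coefficients. Throughout, the stopping at $\tau$ lets me work with the convergences uniformly on compact subsets of $(0,2)$ rather than on the whole interval, which matters because the factor $\log\sin(\pi x/2)$ appearing in $b^c(x)$ is unbounded at the endpoints.

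For part (a) the rate $r(c)=(c\log c^{-1})^{-1}$ produces a drift-dominated limit. Here $c^{3/2}r(c)=c^{1/2}/\log c^{-1}\to0$, so $a^c(x)\to0$ and the limit is deterministic, $\sigma\equiv0$; the drift carries the prefactor $cr(c)=1/\log c^{-1}$, against which the leading $\log c^{-1}$ inside $b^c(x)$ cancels to leave exactly the drift of the ODE~\eqref{eq:odelim}, while the remaining contributions (the $2\log\sin(\pi x/2)+1+2\log2$ term and the $s^+\log s^+-s^-\log s^-$ term) are bounded on compact subsets of $(0,2)$ and are annihilated by the prefactor $1/\log c^{-1}$. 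For parts (b) and (c) the diffusive rate $r(c)=c^{-3/2}$ gives $c^{3/2}r(c)=1$, so from $s^\pm(x,c)\to s(x)$ uniformly one obtains $a^c(x)\to\tfrac{32}{3\pi^3}s(x)^{3/2}$ in both parts; the drift now carries the prefactor $cr(c)=c^{-1/2}$, and its leading term is $c^{-1/2}\log(c^{-1})(s^+(x,c)-s^-(x,c))/\pi^2$, which tends to $0$ under the hypothesis of (b) and to $h(x)/\pi^2$ under that of (c).

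The step that requires the most care, and the one I expect to be the main obstacle, is showing that the subdominant terms of $b^c(x)$ truly vanish in parts (b) and (c), where they are multiplied by the \emph{diverging} factor $c^{-1/2}$ rather than the vanishing factor $1/\log c^{-1}$ of part (a). The key point is that the hypothesis $c^{-1/2}\log(c^{-1})(s^+(x,c)-s^-(x,c))\to h(x)$ (respectively $\to0$) keeps $c^{-1/2}\log(c^{-1})(s^+-s^-)$ bounded, so dividing by $\log c^{-1}\to\infty$ yields $c^{-1/2}(s^+(x,c)-s^-(x,c))\to0$ uniformly. Combined with the mean-value estimate $s^+\log s^+-s^-\log s^-=(s^+-s^-)(1+\log s)+o(s^+-s^-)$, valid since $s^\pm\to s$ uniformly with $s>0$, this forces both the $\log\sin$ term and the $s\log s$ term, once multiplied by $c^{-1/2}$, to tend to $0$. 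I would carry out all these limits uniformly on compact subsets of $(0,2)$ using the assumed uniform convergences and the uniform continuity of the bounded factors there.

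It remains to check the Lipschitz regularity demanded by Proposition~\ref{thrm:maindiff}. The drift appearing in (a), a multiple of $s^+-s^-$, and the drift $h/\pi^2$ in (c) are Lipschitz by hypothesis, and the drift vanishes in (b). For the diffusion coefficient $\tfrac{32}{3\pi^3}s^{3/2}$ I would note that $s$, being Lipschitz on the bounded interval $(0,2)$, is bounded there, and that $t\mapsto t^{3/2}$ is Lipschitz on every bounded subset of $[0,\infty)$ (its derivative $\tfrac32 t^{1/2}$ being bounded on such sets); hence $s^{3/2}$, a composition of Lipschitz maps, is Lipschitz on $(0,2)$. With both limiting coefficients Lipschitz and both convergence hypotheses established, Proposition~\ref{thrm:maindiff} delivers $X_t^c\to X_{t\wedge\tau}$ weakly in $D[0,\infty)$ in each of the three cases.
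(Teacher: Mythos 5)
Your proposal is correct and follows the same route as the paper, which presents Corollary \ref{cor:maindiff} as an immediate specialization of Proposition \ref{thrm:maindiff}: your substitution of $r(c)$ into $b^c$ and $a^c$, the verification that the subdominant drift terms vanish locally uniformly (using that $c^{-1/2}(s^+(\cdot,c)-s^-(\cdot,c))\to 0$ once the hypothesis bounds $c^{-1/2}\log(c^{-1})(s^+-s^-)$), and the check of Lipschitz regularity of the limiting coefficients constitute exactly the intended argument, carried out rather more carefully than the paper bothers to. One caveat for part (a): since $c\log(c^{-1})r(c)=1$ there, the proposition's hypothesis identifies the limiting drift as $(s^+(x)-s^-(x))/\pi^2$, so the cancellation does not leave \emph{exactly} the drift of \eqref{eq:odelim}; this factor-of-$\pi^2$ discrepancy is an inconsistency in the paper's own statement of (a) (compare the explicit $1/\pi^2$ retained in part (c)), and it is harmless because it amounts to a constant rescaling of time or of $r(c)$, but your write-up should flag rather than assert the matching of constants.
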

\begin{remark}
Observe that taking $r(c)=c^{-3/2}$ and $s^+(x,c)=s^-(x,c)=1$, for all $x \in (0,2)$ and $c>0$, recovers the HL(0) result mentioned above.
\end{remark}

\section{Limit distributions}\label{sec:limit}
Our objective is to analyse the long-term behaviour of $X_t^c$, in the limit as $c \to 0$. Specifically, we would like to determine the distribution of
\[
X_{\infty} = \lim_{c \to 0} X_\infty^c = \lim_{c \to 0} \lim_{t \to \infty} X_t^c
\]
(if the limit exists), where all limits are in distribution. As we are interested in whether the red and blue regions of the associated growth model can coexist, we would like to know whether it is possible to select $s^\pm(x,c)$ in such a way that $X_{\infty}$ takes values on the interior of the interval $(0,2)$ with positive probability. Of particular interest is whether the distribution of $X_{\infty}$ can arise through non-trivial ergodic behaviour of the harmonic measure and take values on all of $(0,2)$.

Since rescaling time does not change $X_\infty^c$, we are free to take the rate of arrivals $r(c)$ as fast as possible whilst still ensuring that $X_t^c$ converges to a non-degenerate process $X_t$ as $c \to 0$. The strategy is to find conditions under which $\lim_{t \to \infty} X_{t\wedge \tau}$ has the required behaviour,  where $\tau = \inf\{t>0: X_t \notin (0,2) \}$, and then argue that under these conditions $X_{\infty} = \lim_{t \to \infty} X_{t \wedge\tau}$. Note that if $\tau<\infty$ then, using that convergence in $D[0,\infty)$ implies uniform convergence on compacts, by the Moore-Osgood Theorem $X_{\infty}$ is supported on $\{0,2\}$. In order to have a limit which takes values on all of $(0,2)$ it is therefore necessary to find conditions under which $\mathbb{P}(\tau=\infty)>0$.  

By Corollary \ref{cor:maindiff}, $X_t$ is either the solution to a deterministic ODE or to a diffusion process. In Section \ref{sec:det} we consider the deterministic case; in Section \ref{sec:diff} we analyse the limiting behaviour of diffusion processes.

\subsection{Deterministic limit process} \label{sec:det}
Firstly suppose that $s^+(x) \neq s^-(x)$ for all $x \in (0,2)$. Then taking $r(c)=(c \log(c^{-1}))^{-1}$, we obtain that $X_t^c \to X_t$ as $c \to 0$, where $X_t$ is the solution of the non-trivial ordinary differential equation \eqref{eq:odelim} as $c \to 0$. It is straightforward to deduce that if $s^+(x) > s^-(x)$ for all $x \in (0,2)$, then $\tau <\infty$ and $X_\tau=2$. Hence $X_{\infty}=2$ a.s.,~while if $s^+(x) < s^-(x)$ for all $x$, then $X_{\infty}=0$ a.s.

It is possible to choose $s^\pm(x)$ so that the ODE \eqref{eq:odelim} has stable fixed points in the interior of the interval $(0,2)$, in which case 
\(
\mathbb{P} \left (  X_\infty \in ( 0,2 ) \right ) >0.
\)
However the limit distribution will still be supported on a finite number of point masses.
For example, if $s^+(x)=2-x$ and $s^-(x)=x$ then $X_\infty^c \to 1$ in probability as $c \to 0$.
Showing this requires some care as the deterministic limit process does not in general converge to its fixed point in finite time. However, as we are more interested in whether co-existence can arise through non-trivial ergodic behaviour of the harmonic measure, we do not discuss the deterministic limit case in any further detail.

\subsection{Diffusion limit process} \label{sec:diff}
Now suppose that $s^+(x) = s^-(x)=s(x)>0$ for all $x \in (0,2)$. Then taking $r(c)=c^{-3/2}$, we obtain that $X_t^c \to X_t$ as $c \to 0$, where $X_t$ is the solution to an SDE of the form \eqref{eq:diff}
with $X_0=1$, for $a$ and $b$ as in Corollary \ref{cor:maindiff}.

\begin{definition}[Scale and speed]\label{def:scalespeed} For a diffusion \eqref{eq:diff}, define the scale function $\rho:(0,2) \to \mathbb{R}$ by 
\begin{equation*}
\rho(x)=\int_1^x \exp \left(-2 \int_1^y \frac{b(u)}{a(u)}du \right) dy
\end{equation*}
and the speed density, $m:(0,2) \to [0,\infty)$, by
\begin{equation*}
m(x)=\frac{1}{a(x)} \exp \left(2 \int_1^x \frac{b(y)}{a(y)}dy\right). 
\end{equation*}
\end{definition}

The existence of both is guaranteed provided that $b/a$ is locally integrable which is an assumption we shall make throughout. See Chapter 23 of \cite{KAL02} for further details about the scale function and speed measure, some of which we summarise below.

Suppose that $0<a<1<b<2$ and let $\tau_a = \inf\{t>0:X_t \leq a\}$ and $\tau_b = \inf\{t>0:X_t \geq b\}$. 
The scale function has the property that
\[
\mathbb{P}(\tau_a < \tau_b) = \frac{\rho(b)-\rho(1)}{\rho(b)-\rho(a)} = \frac{\rho(b)}{\rho(b)-\rho(a)}.
\]
Hence if $\rho(0):= \lim_{a \to 0^+} \rho(a) > - \infty$ and $\rho(2):= \lim_{b \to 2^-} \rho(b) < \infty$, then letting $a \to 0$ and $b \to 2$ in the expression above gives $\tau<\infty$ and
\[
\mathbb{P}(X_{\infty}=0)=1-\mathbb{P}(X_{\infty}=2)=\frac{\rho(2)}{\rho(2)-\rho(0)}.
\]
Observe that when $b(x)=0$ for all $x$, $\rho(x)=x-1$ and hence
\[
\mathbb{P}(X_{\infty}=0)=1-\mathbb{P}(X_{\infty}=2)=\frac{1}{2}.
\]
Similarly, it can be shown that if $\rho(0)=-\infty$ and $\rho(2)<\infty$, then
\[
\mathbb{P}(X_{\infty}=0)=1-\mathbb{P}(X_{\infty}=2)=0
\]
and if $\rho(0)>-\infty$ and $\rho(2)=\infty$, then
\[
\mathbb{P}(X_{\infty}=0)=1-\mathbb{P}(X_{\infty}=2)=1.
\]

Finally, we consider the case when $\rho(0)=-\infty$ and $\rho(2)=\infty$. In this case $X_t$ is recurrent if the speed density $m$ is integrable on $(0,2)$; otherwise $X_t$ is null-recurrent. In the case that $X_t$ is recurrent, $\tau = \infty$ and $X_t$ has an asymptotic distribution which has density proportional to $m$. This situation can only arise if we are in case (c) of Corollary \ref{cor:maindiff}. 

\begin{theorem} \label{thrm:mainerg}
Suppose that 
\(
s^+(x)=s^-(x)=s(x)>0
\)
 for all $x \in (0,2)$, that
 \(
 r(c) =  c^{-3/2}
 \)
 and that 
\[
c^{-1/2} \log (c^{-1}) (s^+(x,c)-s^-(x,c)) \rightarrow h(x) 
\]
uniformly in $x$ as $c \to 0$ for some Lipschitz continuous function $h$ on $(0,2)$ for which $h(x)/s(x)^{3/2}$ is locally integrable on $(0,2)$.
Setting 
\[
a(x)=32 s(x)^{3/2}/(3 \pi^3) \quad \mbox{ and } \quad b(x)=h(x)/\pi^2,
\]
define the scale function $\rho$ and speed measure $m$ as in Definition \ref{def:scalespeed}.
Suppose that the following conditions hold:
\begin{enumerate}
\item[(i)]
$\rho(x) \to -\infty$ as $x \to 0$ and $\rho(x) \to \infty$ as $x \to 2$;
\item[(ii)]
\[
M:=\int_0^2 m(x) dx < \infty;
\] 
\item[(iii)] There exist constants $C > D>0$ such that, for all $x \in (0,2)$,
\begin{equation} \label{eq:lyapquad}
2(x-1)b(x) + a(x)  \leq  - C(x-1)^2 + D. 
\end{equation}
\end{enumerate}
Then $X_{\infty}=\lim_{c \to 0} \lim_{t \to \infty} X_t^c$ exists and has density $m(x)/M$ on $(0,2)$. 
\end{theorem}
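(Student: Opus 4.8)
The plan is to decompose the double limit into three ingredients: the long-time behaviour of the limiting diffusion $X_t$, the existence of the inner limit $X_\infty^c=\lim_{t\to\infty}X_t^c$ for each fixed $c$, and the interchange of the two limits. First I would pin down the limiting diffusion. By Corollary \ref{cor:maindiff}(c), with $r(c)=c^{-3/2}$ and the stated convergence $c^{-1/2}\log(c^{-1})(s^+(x,c)-s^-(x,c))\to h(x)$, the rescaled process converges, $X_t^c\Rightarrow X_{t\wedge\tau}$, where $X$ solves $dX_t=\pi^{-2}h(X_t)\,dt+\sqrt{a(X_t)}\,dB_t$ with $a(x)=32s(x)^{3/2}/(3\pi^3)$ and $b(x)=h(x)/\pi^2$, so that the coefficients are exactly those entering Definition \ref{def:scalespeed}. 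Condition (i), namely $\rho(0)=-\infty$ and $\rho(2)=\infty$, is the scale-function criterion (see \cite{KAL02}, Ch.~23) for both endpoints to be inaccessible, whence $\tau=\infty$ almost surely and $X_{t\wedge\tau}=X_t$. Condition (ii), $M=\int_0^2 m<\infty$, then makes $X$ positive recurrent with unique invariant density $m/M$, and standard one-dimensional diffusion theory gives $X_t\Rightarrow m/M$ as $t\to\infty$. This identifies $\lim_{t\to\infty}\lim_{c\to0}X_t^c$ as the claimed law.

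Next I would extract the quantitative control carried by condition (iii). Taking the Lyapunov function $V(x)=(x-1)^2$, the generator of $X$ satisfies $LV(x)=2(x-1)b(x)+a(x)$, so (iii) reads exactly $LV\le -CV+D$ with $C>D$. Integrating along the flow (Dynkin together with Gronwall) gives $\mathbb{E}[V(X_t)]\le V(X_0)e^{-Ct}+(D/C)(1-e^{-Ct})$, and since $X_0=1$ and $C>D$ this yields the uniform-in-$t$ bound $\mathbb{E}[(X_t-1)^2]\le D/C<1$; because $(x-1)^2\le 1$ on $[0,2]$ with equality only at the endpoints, this is precisely the assertion that mass stays bounded away from $\{0,2\}$. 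The crucial point is that the same estimate survives the passage to the prelimit: using the expansions of $b^c,a^c$ from the proof of Proposition \ref{thrm:maindiff} (which converge uniformly to $b,a$) together with the fact that the jumps of $X_t^c$ are $O(c^{1/2})$, the generator $L^c$ of the jump process obeys $L^cV\le -C'V+D'$ with $C'>D'$ for all sufficiently small $c$, the constants being uniform in $c$. Hence $\sup_{c}\sup_{t\ge0}\mathbb{E}[(X_t^c-1)^2]\le D'/C'<1$.

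With this in hand I would finish as follows. For each fixed small $c$ the process $X_t^c$ is an irreducible jump Markov process whose positive recurrence I would deduce from the prelimit drift condition, so it admits a unique stationary law $\pi^c$ and $X_\infty^c=\lim_{t\to\infty}X_t^c\sim\pi^c$ exists. The uniform moment bound forces the family $\{\pi^c\}$ to be tight on $(0,2)$ with $\int(x-1)^2\,d\pi^c\le D'/C'<1$, so no mass escapes to the boundary along subsequences. Passing $c\to0$ in the stationarity identity $\int L^cf\,d\pi^c=0$ for smooth compactly supported $f$, and using $L^cf\to Lf$ uniformly, any weak subsequential limit $\pi^{\ast}$ of $\pi^c$ satisfies $\int Lf\,d\pi^{\ast}=0$; by uniqueness of the invariant law of the diffusion from (i)--(ii) we get $\pi^{\ast}=m/M$, and tightness upgrades this to $\pi^c\Rightarrow m/M$. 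Therefore $X_\infty=\lim_{c\to0}X_\infty^c=\lim_{c\to0}\pi^c$ has density $m/M$, which coincides with $\lim_{t\to\infty}\lim_{c\to0}X_t^c$, so the two iterated limits agree.

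I expect the main obstacle to be the interchange of limits, concentrated in two places: verifying that the diffusion drift estimate (iii) really does transfer to a uniform-in-$c$ drift inequality for the discrete jump generator $L^c$ (controlling both the jump contributions and the error terms hidden in the $(1+o(1))$ factors of Proposition \ref{thrm:maindiff}), and ruling out escape of mass to $\{0,2\}$ in the double limit. The requirement $C>D$ is exactly what keeps $\int(x-1)^2\,d\pi^c$ strictly below its maximal value $1$, and is therefore the structural reason the limiting law can live on the open interval rather than collapsing onto the boundary; making the ``no escaping mass'' step rigorous, uniformly in $c$, is the delicate part.
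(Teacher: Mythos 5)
Your outer skeleton matches the paper's: diffusion ergodicity from (i)--(ii) via scale/speed theory, a Lyapunov bound from (iii) transferred to the jump generator (and your observation that $L^cV=2(x-1)b^c+a^c$ exactly, $V$ being quadratic, is the same device the paper uses), existence of stationary laws $\pi^c$, and identification of $\lim_{c\to 0}\pi^c$. However, there are two genuine gaps, both at steps you dispose of in a phrase.

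First, the fixed-$c$ claim that ``$X_t^c$ is an irreducible jump Markov process whose positive recurrence I would deduce from the prelimit drift condition, so it admits a unique stationary law $\pi^c$ and $\lim_{t\to\infty}X_t^c\sim\pi^c$'' is precisely the hard part of the theorem, and the drift condition alone does not deliver it. Foster--Lyapunov (Theorem 4.5 of \cite{MEY93}) gives \emph{existence} of an invariant law $\pi^c$; uniqueness and convergence from the deterministic start $X_0^c=1$ require a quantitative irreducibility/minorization input, which is nontrivial here because the jumps are state-dependent and of size $O(\sqrt{c})$. The paper spends the bulk of its proof on exactly this: it couples $X_t^c$ with a stationary copy, uses Corollary 2.9 of \cite{LOC17} (fed by the Lyapunov bound) to get simultaneous visits to a compact set $K$ infinitely often, proves a Doeblin-type lower bound for the one-jump kernel on a set $K^0$ of radius $\sim\sqrt{c}$, and then exhibits an explicit coalescence event --- a chain of $N\sim c^{-1/2}$ consecutive jumps of prescribed sizes within prescribed time windows --- whose probability is bounded below uniformly over starting points in $K$. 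None of this follows from citing ``irreducibility''; it has to be constructed, and your proposal contains no substitute for it. (You flag the limit interchange as the delicate part, but this fixed-$c$ ergodicity is where the real work lies.)

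Second, your identification of $\lim_{c\to 0}\pi^c$ has a hole: the bound $\int(x-1)^2\,d\pi^c\le D'/C'<1$ prevents \emph{all} of the mass from escaping to $\{0,2\}$, not \emph{any} of it; a subsequential limit could a priori be $\alpha\delta_0+\beta\delta_2+(1-\alpha-\beta)\mu$ with $\alpha+\beta\le D'/C'$. Your Echeverr\'{\i}a-type argument (pass $c\to0$ in $\int L^cf\,d\pi^c=0$ for smooth compactly supported $f$) cannot close this hole, because test functions supported in $(0,2)$ are blind to boundary atoms: the measure $\alpha\delta_0+(1-\alpha)\,m/M$ satisfies $\int Lf\,d\pi^\ast=0$ for every such $f$ and every $\alpha$, so the stationarity identity only pins down the interior part of the limit up to normalisation. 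The paper avoids this by arguing at the level of processes rather than generators: it applies Theorem \ref{thrm:kurtz} to the jump processes started in their stationary laws $\pi^{c_n}$, concludes that any subsequential limit $\pi^0$ is a time-stationary law for the limiting diffusion, and then invokes Theorem 23.15 of \cite{KAL02} (convergence to the law with density $m/M$ regardless of initial distribution) to force $\pi^0=\pi_\infty$. To keep your generator route you would need an additional argument excluding boundary mass, which the moment bound does not supply.
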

\begin{proof}
By Theorem 23.15 of \cite{KAL02}, conditions (i) and (ii) guarantee that for any solution $X_t$ to the SDE \eqref{eq:diff}, the weak limit $\lim_{t \to \infty} X_t$ exists, regardless of the initial distribution $X_0$, and has density $m(x)/M$. We denote this distribution by $\pi_{\infty}$. 

We now show that if $c$ is sufficiently small, then there is an invariant distribution for the process $X_t^c$.
By the proof of Proposition \ref{thrm:maindiff}, $a^c \to a$ and $b^c \to b$ uniformly as $c \to 0$. Therefore, there exists some $c_0>0$ such that
\[
\|a-a^c\|_{\infty}+2\|b-b^c\|_{\infty} < (C-D)/2
\] 
for all $c<c_0$. In everything that follows, assume that $c<c_0$.
Let $L^c$ be the generator corresponding to $X_t^c$. Then, taking $V(x)=(x-1)^2+1$, by \eqref{eq:lyapquad},
\[
L^c V(x) = a^c(x)+2b^c(x) \leq -C(x-1)^2+(C+D)/2.
\]
The function $V(x)$ is therefore a {\em Lyapounov function} for the process $X_t^c$ satisfying 
\begin{equation}
\label{eq:lyap}
L^cV(x) \leq -C'V(x) + D'I_{K}(x),
\end{equation}
with $K=\{x:|x-1| \leq \eta \}$ for any $\sqrt{(C+D)/(2C)} < \eta < 1$, and $C', D' >0$ (which depend on the choice of $\eta$). By Theorem 4.5 of \cite{MEY93}, $X_t^c$ has an invariant distribution $\pi^c$.

Since the space of probability measures on $[0,2]$ is compact, every subsequence of $\pi^c$ has a convergent subsequence. Suppose $\pi^{c_n} \to \pi^0$ is one such subsequence. Let $X^n_t$ be the pure jump process with generator $L^{c_n}$, starting from $X^n_0 \sim \pi^{c_n}$ and let $X^0_t$ be the solution to the SDE \eqref{eq:diff}, starting from  $X_0 \sim \pi^0$. By Theorem \ref{thrm:kurtz}, $X^n_t \to X^0_t$ weakly in $D[0, \infty)$. Since $X^n_0$ was started in its invariant distribution, $X^n_t \sim \pi^{c_n}$ for all $t$ and hence $X^0_t \sim \pi^0$ for all $t$. However, $\lim_{t \to \infty} X^0_t \sim \pi_{\infty}$ and therefore $\pi^0 = \pi_{\infty}$. Since this holds for every convergent subsequence of $\pi^c$, it follows that $\pi^c \to \pi_{\infty}$ as $c \to 0$.

It is therefore enough to show that for $c$ sufficiently small $\lim_{t \to \infty} X_t^c \sim \pi^c$. Suppose that $\bar{X}^c_t$ is the pure jump process with generator $L^{c}$, starting from $\bar{X}^c_0 \sim \pi^{c}$, and coupled with $X^c_t$ so that if $T$ is the first time that $X^c_T = \bar{X}^c_T$, then $X^c_t = \bar{X}^c_t$ for all $t \geq T$. Since $\bar{X}^c_t \sim \pi^c$ for all $t$, $X^c_t \sim \pi^c$ for all $t \geq T$ and therefore $\lim_{t \to \infty} X_t^c \sim \pi^c$ as required. It is therefore sufficient to construct a coupling in which $T < \infty$ a.s.

By Corollary 2.9 in \cite{LOC17}, \eqref{eq:lyap} implies that for any  coupling, the two processes $X^{c}_t$ and $\bar{X}^{c}_t$ visit the compact set $K$ at the same time infinitely often, almost surely. 
For each $x \in K$, let $X^{c,x}_t$ denote the process with generator $L^{c}$, starting from $X^{c,x}_0=x$. 
Suppose we can couple the processes $X^{c,x}_t$ and $X^{c,y}_t$ so that after a unit of time
\begin{equation}\label{eq:posprob}
\inf_{x,y \in K}\mathbb{P}(X^{c,x}_{1} = X^{c,y}_{1}) > 0.
\end{equation}
Then every time the processes $X^{c}_t$ and $\bar{X}^{c}_t$ are simultaneously in $K$, there is a positive chance that they will have coalesced one unit of time later. It follows that
\[
\mathbb{P}(T=\infty) \leq \lim_{n \to \infty} \left (1-\inf_{x,y \in K}\mathbb{P}(X^{c,x}_{1} = X^{c,y}_{1})\right)^n = 0.
\]
By using \eqref{eq:kernel}, \eqref{eq:taylor}, that $s^\pm(x,c) \to s(x)$ uniformly on $(0,2)$, and that, by the compactness of $K$, $s(x)$ is uniformly bounded away from $0$ and $\infty$ on $K$, it can be shown that there exist $a^0, b^0, c^0 \in (0,1)$ such that for all $c<c^0$, and measurable $A \subseteq [-b^0 \sqrt{c}, -b^0 \sqrt{c}/8] \cup [b^0 \sqrt{c}/8, b^0 \sqrt{c}]$,
\[
\inf_{x \in K} \mathbb{P}(\Delta X^c_{t_n} \in A | X^c_{t_{n-1}}=x) \geq a^0 |A|,
\]
where $|A|$ denotes the Lebesgue measure of $A$ and $t_n$ is the $n^{th}$ jump time of $X_t^{c}$.
Let $K^0=\{x: |x-1| \leq b^0\sqrt{c}/8 \}$ and let $\nu$ denote the uniform probability measure  on $A_0=\{x: b^0\sqrt{c}/4 < |x-1| < 7b^0\sqrt{c}/8 \}$. Then if $\beta=a_0|A_0|$, $0<\beta < 1$ and 
\[
\inf_{x \in K^0} \mathbb{P}(X^{c}_{t_n} \in A | X^{c}_{t_{n-1}}=x) \geq a_0 |A \cap A_0| = \beta \nu(A)
\]
for all measurable sets $A \subseteq (0,2)$, where we have used translation invariance of Lebesgue measure here. 

We now describe the coupling for $X_t^{c,x}$ and $X_t^{c,y}$ with $x,y \in K$. Let the processes evolve independently until the first time that they are both in the set $K^0$. At this point sample the time until the next jump to be the same Exp$(r(c))$ random variable for both processes (which can be done by the memoryless property of the exponential distribution). Then, with probability $\beta$, sample the new position for both processes from $\nu$, from which point onwards the processes remain equal; otherwise sample the new positions independently from the respective distributions $A \mapsto (\mathbb{P}(\Delta X^c_{t_n} \in A | X^c_{t_{n-1}}=x)-\beta \nu (A))/(1-\beta)$ and $A \mapsto (\mathbb{P}(\Delta X^c_{t_n} \in A | X^c_{t_{n-1}}=y)-\beta \nu (A))/(1-\beta)$ and allow the processes to evolve independently until they are next both in $K^0$. At this point, attempt to coalesce again as above, and repeat. 

Finally we need to show that \eqref{eq:posprob} holds for processes coupled in this way. For simplicity, suppose that $1-\eta \leq x,y \leq 1$; a similar argument works in the other cases. Set $A_1=[b^0 \sqrt{c}/8,b^0 \sqrt{c}/4]$,  $N=\lfloor 8/(b^0 \sqrt{c})\rfloor + 1$, and $s_k=k/N$ for $k=0, \dots, N$.  Let $t_k^x$ and $t_k^y$ be the respective jump times of $X_t^{c,x}$ and $X_t^{c,y}$, set $N^x = \inf\{ k: X_{t_k^x}^{c,x} \in K_0\}$ and define $N^y$ similarly. Define $A_{x,y}$ to be the intersection of the three events
$\{ s_{k-1} \leq t_k^x < s_{k}, \Delta X^{c,x}_{t^x_{k}} \in A_1, k=1, \dots, N^x \}$, $\{ s_{k-1} \leq t_k^y < s_{k}, \Delta X^{c,y}_{t^y_{k}} \in A_1, k=1, \dots, N^y \}$ and $\{ s_{N-1} \leq t^x_{N^x+1} = t^y_{N^y+1} < s_N, X^{c,x}_{t^x_{N^x+1}}=X^{c,y}_{t^y_{N^y+1}}\}$. Hence
\[
\inf_{x,y \in K}\mathbb{P}(X^{c,x}_{1} = X^{c,y}_{1}) \geq \inf_{x,y \in K}\mathbb{P}(A_{x,y})\geq e^{-2r(c)}(a_0|A_1|r(c)/N)^{2N} \beta >0, 
\]
as required.
%Split the time interval $[0,1]$ into $\lfloor 8/(b^0 \sqrt{c})\rfloor + 1$ equal length intervals. Let $A_{x,y}$ be the event that $X^{c,x}_{t}$ and $X^{c,y}_{t}$ jump exactly once in each of these intervals, with each jump size lying in the interval $[b^0 \sqrt{c}/8,b^0 \sqrt{c}/4]$, until the process first enters $K^0$; the process then performs no further jumps until the start of the final interval. During the final interval the two processes jump exactly once, simultaneously, and the jump is the same jump for both processes, taken from the distribution $\nu$. Since $A_{x,y}$ can be expressed as the intersection of a finite number of independent events, each with a positive probability that can be bounded below independent of $x$ and $y$,   
%\[
%\inf_{x,y \in K}\mathbb{P}(X^{c,x}_{1} = X^{c,y}_{1}) \geq \inf_{x,y \in K}\mathbb{P}(A_{x,y})>0 
%\]
%as required.
\end{proof}

\begin{remark}
The condition \eqref{eq:lyapquad} can be relaxed to the weaker assumption requiring the existence of a Lyapounov function and compact set $K$ that satisfies \eqref{eq:lyap} with generator $L^c$, for every $c>0$. Although the current condition is slightly more restrictive, it is straightforward to verify for specific examples.
\end{remark}

\section{An example of coexistence}\label{sec:example}

The above analysis identifies sufficient conditions for ensuring coexistence between the two regions. It shows that constructing cases  in which the limiting distribution arises through non-trivial ergodic behaviour of the harmonic measure is very delicate. The way in which the sizes of particles must be tuned depending on which region they land in, and how close to the boundary they land, is subtle. In this section we give an example of a process in which the necessary conditions are satisfied.

\begin{figure}[h]
\begin{center}
\includegraphics[scale=0.5]{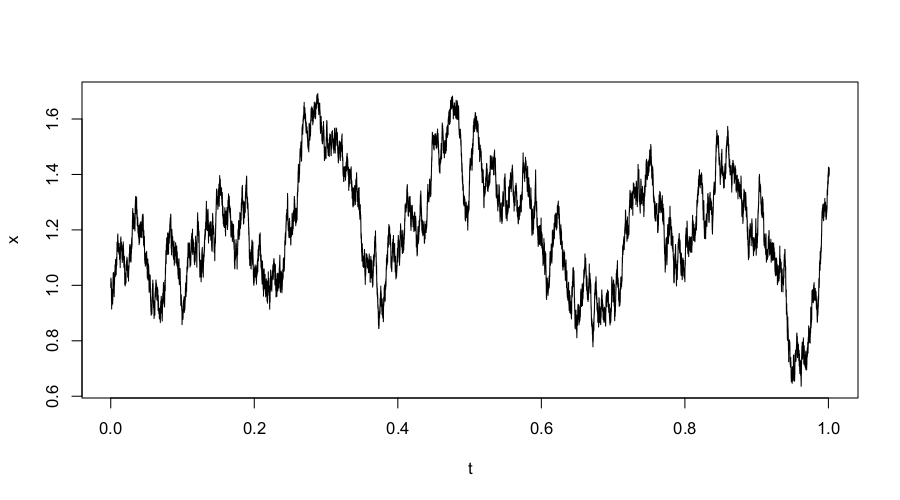} %0.6
\includegraphics[scale=0.55]{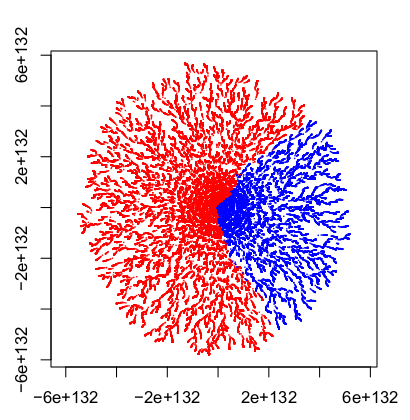} %\quad %0.65
\includegraphics[scale=0.5]{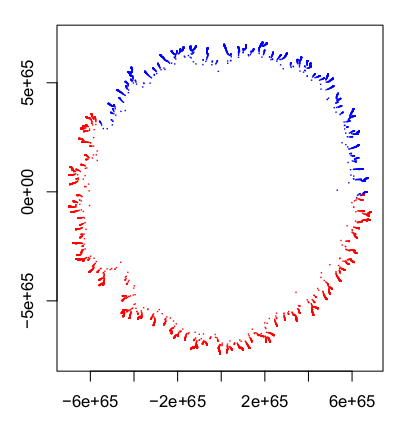} %0.6
\caption{Simulation of the model described in Section \ref{sec:example} with $c=10^{-4}$. The top image is a realisation of the process $X_t^c$. The bottom left image shows the cluster corresponding to this realisation at time $t=1$. Due to the distortion of particle shapes of the HL(0) model mentioned in Section \ref{sec:harm}, almost all of the visible cluster comprises particles that arrived in the final 1\% of the time. The bottom right image gives a snapshot of where the harmonic measure is concentrated in each region of the cluster at time $t=0.5$. (Note the change in scale on the axes between the two images).} \label{fig:example} 
\end{center}
\end{figure}

Set 
\begin{align*}
s^+(x,c)&=\pi^2 \left ( \left (\frac{3x(2-x)}{16} \right )^{2/3} + c^{1/2} (\log(c^{-1}))^{-1}(2-x) \right ), \\
s^-(x,c)&=\pi^2 \left (  \left (\frac{3x(2-x)}{16} \right )^{2/3} + c^{1/2} (\log(c^{-1}))^{-1}x \right ).
\end{align*}
Taking $r(c)=c^{-3/2}$, the conditions of Corollary \ref{cor:maindiff} (c) are satisfied with  $s(x)=\pi^2  (3x(2-x)/16)^{2/3}$ and $h(x)=2\pi^2(1-x)$. Hence $X_t^c \to X_t$ weakly in $D[0,\infty)$ as $c \to 0$, where
\[
dX_t= 2(1-X_t)dt+\sqrt{2 X_t(2-X_t)} dB_t.
\]
The scale function and speed density are given by
\begin{align*}
\rho(x) &= \int_1^x \exp \left ( -2 \int_1^y \frac{1-u}{u(2-u)} du \right  ) = \int_1^x \frac{1}{y(2-y)} dy = \log \sqrt{\frac{x}{2-x}},
\\
m(x) &= \frac{1}{2x(2-x)}\exp \left ( 2 \int_1^x \frac{1-u}{u(2-u)} du \right  ) = \frac{1}{2x(2-x)} \exp \left ( \log x + \log (2-x) \right )  = \frac{1}{2}.
\end{align*}
It is straightforward to check that the conditions of Theorem \ref{thrm:mainerg} are satisfied. It follows that the weak limit $X_{\infty}$ exists and is uniformly distributed on $(0,2)$.

A simulation of the process $(X_t^c)_{t \geq 0}$ and corresponding cluster is shown in Figure \ref{fig:example}.

%%%%%%%%%%%%%%%%%%%%%%%%%%%%%%%%%%%%%%%%%%%%%%%%%%%%%%%%%%%%%%%%%%%
%%                                                               %%
%% Use the two commands below for producing your bibliography    %%
%% with bibtex, then comment again the commands and include the  %%
%% content of the .bbl file in this file below the commands.     %%
%%                                                               %%
%%%%%%%%%%%%%%%%%%%%%%%%%%%%%%%%%%%%%%%%%%%%%%%%%%%%%%%%%%%%%%%%%%%

\bibliographystyle{amsplain}
%\bibliography{mybib}

% add below the content of your .bbl file produced by bibtex.

\providecommand{\bysame}{\leavevmode\hbox to3em{\hrulefill}\thinspace}
\providecommand{\MR}{\relax\ifhmode\unskip\space\fi MR }
% \MRhref is called by the amsart/book/proc definition of \MR.
\providecommand{\MRhref}[2]{%
  \href{http://www.ams.org/mathscinet-getitem?mr=#1}{#2}
}
\providecommand{\href}[2]{#2}

%%%%%%%%%%%%%%%%%%%%%%%%%%%%%%%%%%%%%%%%%%%%%%%%%%%%%%%%%%%%%%%%%%%
%%                                                               %%
%% You may add acknowledgments (optional).                       %%
%%                                                               %%
%%%%%%%%%%%%%%%%%%%%%%%%%%%%%%%%%%%%%%%%%%%%%%%%%%%%%%%%%%%%%%%%%%%

\ACKNO{We are grateful to Eva L\"ocherbach for directing us to the paper \cite{LOC17}, which provided the key ideas for the proof of Theorem \ref{thrm:mainerg}, and to the anonymous referee whose suggestions significantly improved the paper.
}

%%%%%%%%%%%%%%%%%%%%%%%%%%%%%%%%%%%%%%%%%%%%%%%%%%%%%%%%%%%%%%%%%%%
%%                                                               %%
%% You have reached the end of your document.                    %%
%%                                                               %%
%%%%%%%%%%%%%%%%%%%%%%%%%%%%%%%%%%%%%%%%%%%%%%%%%%%%%%%%%%%%%%%%%%%

\end{document}